\documentclass[11pt]{article}

\usepackage[english]{babel}
\usepackage[utf8x]{inputenc}
\usepackage[colorinlistoftodos]{todonotes}
\usepackage{float}


\usepackage{amsmath, amssymb, amsthm, cleveref}
\usepackage{graphicx}

\usepackage{mathrsfs}

\renewcommand{\baselinestretch}{1.25}

\textwidth=6.5 in \textheight=9 in \hoffset=-.7 in \voffset=-.5 in

 \newtheorem{thm}{Theorem}[section]
 \newtheorem{cor}[thm]{Corollary}
 \newtheorem{lem}[thm]{Lemma}
 \newtheorem{prop}[thm]{Proposition}

 \newtheorem{obs}[thm]{Observation}
 \theoremstyle{definition}
 \newtheorem{defn}[thm]{Definition}
 \newtheorem{exmp}[thm]{Example}

 \theoremstyle{remark}
 \newtheorem{rem}[thm]{Remark}

\numberwithin{equation}{section}



 \newcommand{\To}{\longrightarrow}
 \newcommand{\Map}[3]{#1\, :\, #2\To #3}

 \newcommand{\Nat}{\mathbb{N}}
 
 \newcommand{\code}{\texttt{code}}
 \newcommand{\Sscl}{S_{\text{scl}}}
 \newcommand{\Tscl}{T_{\text{scl}}}
 \newcommand{\Spscl}{S'_{\text{scl}}}
  
  \newcommand{\Tpscl}{T'_{\text{scl}}}

 \newcommand{\set}[1]{\left\{#1\right\}}
 \newcommand{\Set}[2]{\set{#1\ \vert\ #2}}




\newcommand{\Mod}[3]{#1\equiv #2 \pmod{#3}}


\title{Oriented Bipartite Graphs and the Goldbach Graph}

\author{Sandip Das\thanks{\footnotesize{Advanced Computing and Microelectronic Unit, Indian Statistical Institute, Kolkata, India. \hspace{2in} sandipdas@isical.ac.in}}\,,
Prantar Ghosh\thanks{Department of Computer Science, Dartmouth College, Hanover, USA. prantar.ghosh.gr@dartmouth.edu }\,,
Shamik Ghosh\thanks{Department of Mathematics, Jadavpur University, Kolkata, India. shamik.ghosh@jadavpuruniversity.in\hspace{1in} (Communicating author)}\ \ and\ 
Sagnik Sen\thanks{Department of Mathematics, Indian Institute of Technology Dharwad, Dharwad, India. sen@iitdh.ac.in}}

\date{\today}

\begin{document}

\maketitle
\begin{abstract}
\noindent
In this paper, we study oriented bipartite graphs. In particular, we introduce ``bitransitive'' graphs. Several characterizations of bitransitive bitournaments are obtained. We show that bitransitive bitounaments are equivalent to acyclic bitournaments. As applications, we characterize acyclic bitournaments with Hamiltonian paths, determine number of non-isomorphic acyclic bitournaments of a given order, and solve the graph-isomorphism problem in linear time for acyclic bitournaments. Next, we prove the well-known Caccetta-H$\ddot{\textrm{a}}$ggkvist Conjecture for oriented bipartite graphs for some cases for which it is unsolved in general oriented graphs. We also introduce the concept of undirected as well as oriented ``odd-even'' graphs. We characterize bipartite graphs and acyclic oriented bipartite graphs in terms of them. In fact, we show that any bipartite graph (acyclic oriented bipartite graph) can be represented by some odd-even graph (oriented odd-even graph). We obtain some conditions for connectedness of odd-even graphs. This study of odd-even graphs and their connectedness is motivated by a special family of odd-even graphs which we call ``Goldbach graphs''. We show that the famous Goldbach's conjecture is equivalent to the connectedness of Goldbach graphs. Several other number theoretic conjectures (e.g., the twin prime conjecture) are related to various parameters of Goldbach graphs, motivating us to study the nature of vertex-degrees and independent sets of these graphs. Finally, we observe Hamiltonian properties of some odd-even graphs related to Goldbach graphs for small number of vertices.
\end{abstract}

\noindent \textbf{Keywords:} Prime number; bipartite graph; directed bipartite graph; oriented bipartite graph; directed bitransitive graph; bitournament; Goldbach conjecture.


\section{Introduction}
A (simple) directed graph $D=(V,E)$ is {\em bipartite} if the vertex set $V$ is partitioned into $X$ and $Y$ such that there is no arc between any two vertices of $X$ or between any two vertices of $Y$. We usually denote such a graph by $D=(X,Y,E)$. A directed bipartite graph $D=(V,E)$ is {\em oriented} if for any $u,v\in V$, $uv\in E$ implies $vu\notin E$. An oriented bipartite graph $D=(X,Y,E)$ is called a {\em bitournament} if for all $x\in X$ and $y\in Y$, either $xy\in E$ or $yx\in E$. For a directed graph $D$, the undirected graph $G(D)$ obtained from $D$ by disregarding directions of arcs is the {\em underlying graph} of $D$. Moreover two arcs $e$ and $f$ of $D$ are {\em adjacent} if they have a common end point in $G(D)$. The adjacency matrix $M(D)$ of a directed bipartite graph, $D=(X,Y,E)$ is of the following form:
$$M(D)\ =\ \begin{array}{cc|ccc|ccc|c}
\multicolumn{3}{c}{} & X & \multicolumn{2}{c}{} & Y & \multicolumn{2}{c}{}\\ \cline{3-8}
X &&& \mathbf{0} &&& A &&\\ \cline{3-8}
Y &&& B &&& \mathbf{0} &&\\ \cline{3-8}
\end{array}$$
where $A$ and $B$ are two $(0,1)$-matrices. Note that in the case of an undirected bipartite graph, we have $B=A^T$, but it is not true in general for oriented bipartite graphs. $D$ is {\em unidirectional} if either $xy\notin E$ for all $x\in X$, $y\in Y$ or $yx\notin E$ for all $x\in X$, $y\in Y$. In this case either $A=\mathbf{0}$ or $B=\mathbf{0}$ in $M(D)$.

\vspace{1em}\noindent
There are some interesting studies over directed bipartite graphs, oriented bipartite graphs, and in particular, oriented tress \cite{berr,min,mon,wang}. For a comprehesive study on bitournaments and in general, multipartite digraphs, one may consult the monograph \cite{BG}, in particular, Chapter 2 \cite{BFH} and Chapter 7 \cite{YEO}. In this paper, we introduce bitransitive (directed) graphs. Several characterizations of bitransitive bitournaments are obtained. In particular, we show that bitounaments are bitransitive if and only if they are acyclic. As applications of the theorem, we characterize acyclic bitournaments with Hamiltonian paths, determine number of non-isomorphic acyclic bitournaments of a given order, and solve the isomorphism problem in linear time for acyclic bitournaments. 

\vspace{1em}\noindent
Next, we consider the Caccetta-H$\ddot{\textrm{a}}$ggkvist Conjecture that states ``Every simple directed graph of order $n$ with minimum outdegree at least $r$ has a cycle of length at most $\lceil n/r\rceil$.'' The conjecture is open for $r=n/3,n/4,n/5$ and so on. We prove that the result is true for directed bipartite graphs for $r=n/3,n/4,n/5$. Our main technical contribution is the $n/5$ case, which requires some rigorous analysis of vertex-degrees. 

\vspace{1em}\noindent
We introduce the concept of oriented {\em odd-even} graphs and their undirected counterpart. We characterize the class of oriented bipartite graphs and (undirected) bipartite graphs in their terms. In fact, we show that any (acyclic oriented) bipartite graph can be represented by some (resp. oriented) odd-even graph. We obtain a necessary condition and another sufficient condition for connectedness of odd-even graphs. We study some cases where oriented odd-even graphs become unidirectional. 

\vspace{1em}\noindent
Finally, we introduce {\em Goldbach graphs}, a special family of odd-even graphs. We show that the famous Goldbach's conjecture is equivalent to the connectedness of Goldbach graphs. Furthermore, we observed that Maillet's, Kronecker's, and twin prime conjectures are related to various parameters of Goldbach graphs, especially to the vertex-degrees. So we study the nature of vertex-degrees and independent sets of Goldbach graphs. In the concluding section, we observe Hamiltonian properties of some odd-even graphs related to Goldbach graphs for small number of vertices and exhibit a sequence of even natural numbers up to $1000$ such that for any pair of consecutive numbers in the sequence, one of them is the sum of two odd primes or $1$ and the other is the difference between them (cf. Appendix B). 

\vspace{1em}\noindent
Throughout the paper let $\Nat$ denote the set of all natural numbers. If two natural numbers $a$ and $b$ are congruent modulo $p$, then we denote it by $a\equiv_p b$. Hence, in particular, for two numbers $a$ and $b$ with the same parity, we write $a \equiv_2 b$, and if they have the opposite parity, we write $a \not\equiv_2 b$. We denote the set $\set{1,2,\ldots,n}$ by $[n]$ for any $n\in\Nat$.

\section{Oriented Bipartite Graphs}

In this section, we study several classes of oriented bipartite graphs. In \Cref{sec:bitournbitran}, we introduce bitransitive  digraphs and characterize bitransitive bitournaments. Next, in \Cref{sec:acycbit}, we study acyclic bitournaments. Then, in \Cref{sec:ch}, we prove the Caccetta-H$\ddot{\textrm{a}}$ggkvist conjecture restricted to the class of oriented bipartite graphs for some cases that are open for general oriented graphs. 

\subsection{Bitournaments and Bitransitive Digraphs}\label{sec:bitournbitran}
We begin with an observation. Oriented trees form an interesting subclass of the class of oriented bipartite graphs. Let $T$ be an oriented tree. Then a path in the underlying tree $G(T)$ of $T$ is called {\em alternating} if each pair of adjacent arc are of opposite direction in $T$.

\begin{obs}
In an oriented tree $T$, there is an alternate path between any two vertices of $T$ if and only if for each vertex $v\in V(T)$, either $\text{indeg}\, (v) = 0$ or $\text{outdeg}\, (v) = 0$ (i.e., $T$ is unidirectional). 
\end{obs}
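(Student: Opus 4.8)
The plan is to exploit the fact that $T$ is a tree: between any two vertices there is a \emph{unique} path in the underlying tree $G(T)$, so the assertion ``there is an alternating path between $u$ and $v$'' is equivalent to ``the unique $u$--$v$ path is alternating.'' The whole biconditional then reduces to a purely local condition at each vertex. First I would pin down the meaning of ``opposite direction'': two arcs adjacent at a vertex $v$ are of opposite direction along a path exactly when both are oriented \emph{into} $v$ or both are oriented \emph{out of} $v$ (as opposed to one entering and one leaving, which is a consistently directed segment). Thus a path is alternating if and only if at every internal vertex the two path-arcs are both incoming or both outgoing; equivalently, each internal vertex behaves as a local source or a local sink relative to that path.

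For the direction $(\Leftarrow)$, assume each vertex $v$ satisfies $\text{indeg}(v)=0$ or $\text{outdeg}(v)=0$. Take any two vertices and the unique path joining them. At each internal vertex $v$, the source/sink property forces \emph{all} arcs incident to $v$ to point out of $v$ (if $\text{indeg}(v)=0$) or all into $v$ (if $\text{outdeg}(v)=0$); in particular the two path-arcs at $v$ agree in this sense, which is exactly the alternating condition. Hence the unique path is alternating.

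For $(\Rightarrow)$ I argue the contrapositive. Suppose some vertex $v$ is neither a source nor a sink, so it has an in-neighbour $a$ (arc $a\to v$) and an out-neighbour $b$ (arc $v\to b$); since one arc enters $v$ and the other leaves, necessarily $a\ne b$, and $a,v,b$ is a genuine path. As $a$ and $b$ are both adjacent to $v$ in the tree, this is the \emph{unique} $a$--$b$ path, and at $v$ one path-arc enters while the other leaves, so the path fails to alternate. Being the only path between $a$ and $b$, this means there is no alternating path between them, contradicting the hypothesis; hence every vertex is a source or a sink.

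Finally, to justify the parenthetical identification with unidirectionality, I would note that the source/sink labelling is a proper $2$-colouring of $T$: every edge joins a source to a sink, with the source being the tail of the arc. Since a connected bipartite graph (in particular a tree) admits a unique bipartition, this colouring must coincide with $\{X,Y\}$, so either all arcs go $X\to Y$ or all go $Y\to X$, i.e. $A=\mathbf{0}$ or $B=\mathbf{0}$. The only real obstacle here is bookkeeping around the definition of ``alternating''/``opposite direction''; once that is fixed, uniqueness of paths in a tree does all the work, the one point needing care being the distinctness of $a$ and $b$ in the $(\Rightarrow)$ step.
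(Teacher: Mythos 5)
Your proof is correct and complete. The paper states this result as an Observation and supplies no proof of its own, so there is nothing to compare against; your argument --- reducing everything to the \emph{unique} tree path, pinning down ``opposite direction'' as both-in or both-out at the shared vertex (the only reading under which the statement is true), running the local source/sink analysis in both directions, and then using uniqueness of the bipartition of a connected bipartite graph to identify the source/sink classes with the partite sets $X,Y$ --- is exactly the intended justification, including the small but necessary check that $a \ne b$ in the forward direction (guaranteed because an oriented graph has no $2$-cycles).
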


\noindent
In the following, we introduce {\em bitransitive} bipartite digraphs, analogous to transitive general digraphs.

\begin{defn}[Bitransitive Digraph]\label{defbit}
An oriented bipartite graph $D=(X,Y,E)$ is called {\em bitransitive} if for any $x_1,x_2,y_1,y_2\in X\cup Y$, $x_1y_1, y_1x_2, x_2y_2\in E \Longrightarrow x_1y_2\in E$ (see Figure \ref{bit1}). 
\end{defn}

\begin{figure}[ht]
\begin{center}
\includegraphics[scale=0.24]{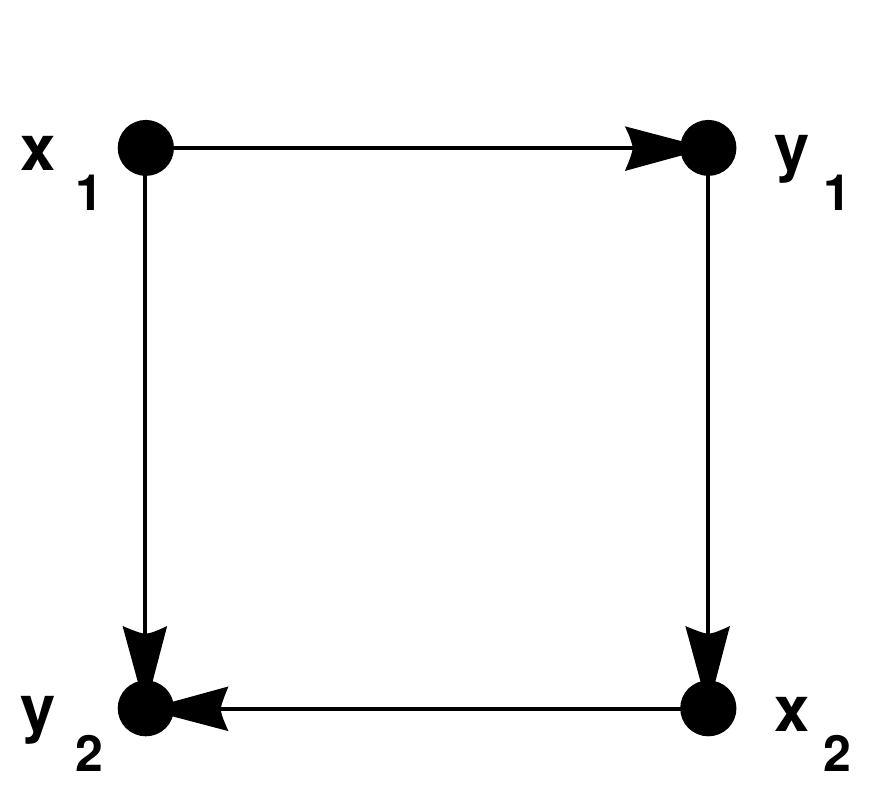}
\caption{An illustration of bitransitive property}\label{bit1}
\end{center}
\end{figure}

\noindent
We shall now define a digraph labelled by natural numbers that would work as an example of a bitransitive bitournament and shall help us to characterize them.
\begin{defn}[Digraph $D_S$]\label{DS}
Given a non-empty set $S\subseteq \Nat$, define $D_S$ as a digraph with the vertex set $S$ and the arc set $E=\Set{(a,b)\in S\times S}{a<b \text{ and } a \not\equiv_2 b}$. 
\end{defn}

\vspace{-1em}
\begin{exmp}
For any non-empty set $S\subseteq \Nat$, $D_S=(X,Y,E)$ is a bitransitive bitournament with $X=\Set{u\in S}{u \text{ is even}}$ and $Y=\Set{u\in S}{u \text{ is odd}}$.
\end{exmp}

\noindent
The following theorem characterizes bitransitive bitournaments. A {\em Ferrers digraph} $D=(V,E)$ is a directed graph whose successor sets are linearly ordered by inclusion where the successor set of $v\in V$ is its set of out-neighbors $\Set{u\in V}{vu\in E}$. It is known that a directed graph $D$ is a Ferrers digraph if and only if its adjacency matrix does not contain any $2\times 2$ permutation matrix (called a {\em couple})~\cite{bas,JR}:
$$\left[\begin{array}{cc}
1 & 0\\
0 & 1
\end{array}\right]\qquad\text{or}\qquad \left[\begin{array}{cc}
0 & 1\\
1 & 0
\end{array}\right].$$

\begin{thm}\label{bitbit}
Let $D = (X,Y,E)$ be a bitournament. Then the following are equivalent:
\begin{enumerate}
\item $D$ is bitransitive.
\item $D$ has no directed 4-cycle.
\item $D$ has no directed cycle.
\item The matrix $M(D)$ is given by $$\begin{array}{cc|ccc|ccc|c}
\multicolumn{3}{c}{} & X & \multicolumn{2}{c}{} & Y & \multicolumn{2}{c}{}\\ \cline{3-8}
X &&& \mathbf{0} &&& A &&\\ \cline{3-8}
\null &&& \null &&& \null &&\\[-1em]
Y &&& {\overline{A}}^T &&& \mathbf{0} &&\\ \cline{3-8}
\end{array}$$ where $A$ is the adjacency matrix of a Ferrer's digraph and $\overline{A}$ is the $1$'s complement of $A$.
\item $D\cong D_S$ (Definition \ref{DS})  for some nonempty set $S\subseteq\Nat$.
\end{enumerate} 
\end{thm}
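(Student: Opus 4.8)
The plan is to prove the chain of implications $(1)\Rightarrow(2)\Rightarrow(3)\Rightarrow(4)\Rightarrow(5)\Rightarrow(1)$, since this is the most economical way to establish the equivalence of five conditions. Several of the links are nearly immediate, so the real content lies in a few specific steps.

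First I would handle the easy directions. The implication $(3)\Rightarrow(2)$ is trivial (a directed $4$-cycle is a directed cycle), but I will instead show $(2)\Rightarrow(3)$ by proving the contrapositive: in a bitournament, any shortest directed cycle must have length exactly $4$, because the graph is bipartite (so every directed cycle has even length), and a shortest cycle cannot have a chord without producing a strictly shorter cycle; the bitournament property forces the relevant chord arc to exist in one direction or the other, so a directed $6$-cycle or longer would always contain a directed $4$-cycle. For $(1)\Rightarrow(2)$, I would argue directly: if $x_1y_1,y_1x_2,x_2y_2,y_2x_1$ formed a directed $4$-cycle, bitransitivity applied to $x_1y_1,y_1x_2,x_2y_2$ forces $x_1y_2\in E$, which together with $y_2x_1\in E$ contradicts orientedness. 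The reverse glue $(5)\Rightarrow(1)$ is a routine verification that $D_S$ is bitransitive: if $a\to b$, $b\to c$, $c\to d$ with the stated parity/order constraints, then $a<b$, $b>c$, $c<d$, and checking $a<d$ together with opposite parity of $a,d$ gives $a\to d$.

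The two steps carrying the genuine structural weight are $(2)\Rightarrow(4)$ (equivalently $(3)\Rightarrow(4)$) and $(4)\Rightarrow(5)$. For the matrix condition, note first that since $D$ is a bitournament, for each pair $(x,y)$ exactly one of $xy,yx$ lies in $E$ (orientedness rules out both, the tournament property rules out neither), so $B=\overline{A}$ automatically; the substance is showing $A$ is a Ferrers digraph, i.e.\ that $A$ contains no couple. Here I would translate a couple in $A$ into the existence of arcs $x_1y_1,x_2y_2\in E$ with $x_1y_2,x_2y_1\notin E$; by the bitournament property the absence of $x_1y_2$ and $x_2y_1$ means $y_2x_1,y_1x_2\in E$, and then $x_1y_1,y_1x_2,x_2y_2,y_2x_1$ is a directed $4$-cycle, contradicting $(2)$. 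Conversely any directed $4$-cycle yields a couple, so the no-couple condition on $A$ is exactly the no-$4$-cycle condition.

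The hard part will be $(4)\Rightarrow(5)$: realizing an abstract Ferrers bitournament as some $D_S$. The plan is to use that the Ferrers property linearly orders the vertices by their successor sets (the out-neighborhoods are nested by inclusion). I would extend this to a single linear order $\prec$ on all of $V(D)=X\cup Y$ in which arcs point from smaller to larger, alternating parity as we move along the order — the idea being that a Ferrers digraph admits a consistent ordering of rows and columns making the $1$'s of $A$ form a staircase. Having fixed such an order $v_1\prec v_2\prec\cdots\prec v_n$ with the $X$-vertices and $Y$-vertices interleaved according to the staircase structure, I would assign natural numbers to the vertices respecting both $\prec$ and the parity dictated by membership in $X$ (even) versus $Y$ (odd), choosing the actual integer values so that $v_i\to v_j$ precisely when $v_i\prec v_j$; this gives the required set $S$ and isomorphism $D\cong D_S$. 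The main obstacle I anticipate is verifying that the Ferrers/staircase order can always be refined to an interleaved linear order compatible with the $D_S$ construction — in particular handling vertices with equal successor sets and confirming that the alternation of parities never conflicts with the inclusion order — so I would spend most of the care on constructing $\prec$ and checking that adjacency in $D$ matches the strict order relation exactly.
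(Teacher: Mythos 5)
Your proposal is correct, but it is organized quite differently from the paper's proof, most notably in the one step that carries real weight. The paper does not prove a single cycle of implications: it shows $(2)\Rightarrow(1)$, $(1)\Rightarrow(3)$, $(3)\Rightarrow(2)$, $(5)\Rightarrow(2)$, $(2)\Leftrightarrow(4)$, and finally $(3)\Rightarrow(5)$; that last implication --- the counterpart of your $(4)\Rightarrow(5)$ --- is done by induction on the number of vertices: delete a vertex $v$, realize the remaining bitournament as some $D_S$, then assign $v$ a large even label $m$ and shift the labels of all vertices reachable from $v$ upward by $m$, checking that no adjacency is disturbed. Your route instead extracts the realization directly from the Ferrers structure: order $X$ so that the successor sets are nested; then, by the bitournament property, each $y\in Y$ has in-neighborhood equal to a prefix $\{x_1,\dots,x_{k_y}\}$ of that order, so inserting $y$ after $x_{k_y}$ produces a linear order of $X\cup Y$ in which every arc points forward, and a parity-respecting strictly increasing labeling (even for $X$, odd for $Y$) finishes the job. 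This is a genuinely different and arguably cleaner argument: it is non-inductive, yields an explicit isomorphism, and makes transparent why the Ferrers staircase is exactly the $D_S$ structure, whereas the paper's induction never needs a global order but must reason carefully about reachability sets under relabeling. Your $(2)\Rightarrow(3)$ via a chord of a shortest directed cycle is likewise different from (and slicker than) the paper's $(1)\Rightarrow(3)$, which inducts along the cycle using bitransitivity. Two small points to repair in the write-up: in $(5)\Rightarrow(1)$ the chain should read $a<b<c<d$ (you wrote $b>c$, evidently a slip, since $b\to c$ in $D_S$ means $b<c$); and in $(4)\Rightarrow(5)$ you should state explicitly that same-parity labels never create arcs in $D_S$, which is what matches the absence of arcs inside $X$ and inside $Y$.
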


\begin{proof}
$\mathbf{2 \implies 1}$: 
Suppose there is no directed $4$-cycle in a bitournament $D=(X,Y,E)$. Let $u_1u_2, u_2u_3, u_3u_4 \in E$ for some $u_1,u_2,u_3,u_4\in V(D)=X\cup Y$. Then $u_4u_1 \notin E$. Since $D$ is a bitournament, we have $u_1u_4 \in E$ . Hence it follows from Definition \ref{defbit} that $D$ is bitransitive.  

\vspace{1em}
\noindent
$\mathbf{1 \implies 3}$: 
Suppose $D=(X,Y,E)$ is bitransitive but has a directed cycle. Since $D$ is bipartite, there cannot be any odd cycle. Hence the cycle is even. Now let the cycle be $(u_1,u_2,\ldots,u_{2n})$. We prove by induction that $u_1u_{2k} \in E$ for all $k=1,2,\ldots,n$. By induction hypothesis, $u_1u_{2(k-1)} \in E$. Now $u_{2(k-1)}u_{2k-1}, u_{2k-1}u_{2k}$. Hence $u_1u_{2k} \in E$. So by induction, $u_1u_{2k} \in E$ for all $k=1,2,\ldots,n$. Hence $u_1u_{2n} \in E$. But we have already $u_{2n}u_1 \in E$. Since $D$ is a bitournament, both $u_1u_{2n}, u_{2n}u_1$ cannot be in $E$. Hence there is a contradiction. 

\vspace{1em}
\noindent
$\mathbf{3 \implies 2}$: Obvious.

\vspace{1em}
\noindent
$\mathbf{5 \implies 2}$:
Suppose $D \cong D_S$ for some nonempty set $S\subseteq\Nat$.
Suppose it has a directed $4$-cycle $(u_1,u_2,u_3,u_4)$. So  $u_1u_2, u_2u_3, u_3u_4, u_4u_1 \in E$. This implies $u_1 < u_2 < u_3 < u_4 < u_1$ which is a contradiction. So $D$ cannot have a directed $4$-cycle.

\vspace{1em}
\noindent
$\mathbf{2 \iff 4}$:
The adjacency matrix $A$ is not of a Ferrer's digraph if and only if there is a couple in $A$ such that 
$$\begin{array}{cc|ccc|ccc|c}
\multicolumn{3}{c}{} & y_r & \multicolumn{2}{c}{} & y_s & \multicolumn{2}{c}{}\\ \cline{3-8}
x_i &&& 1 &&& 0 &&\\ \cline{3-8}
x_j &&& 0 &&& 1 &&\\ \cline{3-8}
\end{array}$$
Hence ${\overline{A}}^T$ has the submatrix.
$$\begin{array}{cc|ccc|ccc|c}
\multicolumn{3}{c}{} & x_i & \multicolumn{2}{c}{} & x_j & \multicolumn{2}{c}{}\\ \cline{3-8}
y_r &&& 0 &&& 1 &&\\ \cline{3-8}
y_s &&& 1 &&& 0 &&\\ \cline{3-8}
\end{array}$$
Thus, $x_i \rightarrow y_r$, $y_r \rightarrow x_j$, $x_j \rightarrow y_s$ and $y_s \rightarrow x_i$. Then we get a $4$-cycle. Hence $A$ is not the adjacency matrix of a Ferrer's digraph if and only if there is a directed $4$-cycle. That is, $A$ is the adjacency matrix of a Ferrer's digraph if and only if there is no directed $4$-cycle. 

\vspace{1em}\noindent
$\mathbf{3 \implies 5}$:
We prove this by induction on number of vertices of a bitournament $D=(X,Y,E)$. The result is trivially true for $2$ vertices, one in each partite set. Now suppose there are $n+1>2$ vertices in $D$. Now we remove a vertex $v$ from $D$. Then by induction hypothesis, the result is true for the resultant graph, say $D_1$ which has $n$ vertices, i.e., $D_1\cong D_S$ for some $\emptyset\neq S\subseteq\Nat$. Now, let $A$ be the set vertices $u$ of $D$ such that there is a directed path from $u$ to $v$. Let $B$ be the set of vertices $w$ of $D$ such that there is a directed path from $v$ to $w$. Since there is no directed cycle, $A$ and $B$ are disjoint. Now in $D_S$, any two vertices of opposite parity are adjacent so they are belonging to different partite sets in $D$. Thus $v$ cannot be adjacent to both of them. Let $v\in X$. Without loss of generality we may assume that other vertices of $X$ are labeled by even numbers in $D_1$ for otherwise we increase the label of each vertex in $D_1$ by $1$. 

\vspace{1em}\noindent
Let $m$ be an even number that is greater than all labels of vertices in $D_1$. We label $v$ as $m$ and for each $w \in B$, we relabel $w$ as $w+m$. We first note that adding $m$ does not change the parity for any $w$ in $B$. Next we prove that this relabeling does not violate the adjacency condition. Let there be an arc from $w\in B$ to a vertex $x$ in $D_1$. Then by construction $x\in B$. Hence all arcs from any $w \in B$ go to vertices to $B$ itself. Since the original labeling did not violate the adjacency condition, increasing each label by $m$ also does not violate it for arcs from some vertex of $B$ to another vertex of $B$. Now for the arcs from some $x\notin B$ to some $w \in B$, the adjacency condition is not violated as we have increased the label of $w$. All arcs from $v$ go to some vertex of $B$. Since $v=m$ and $w+m>m$, the adjacency condition is not violated for arcs from $v$ to some vertex of $B$. If there is an arc from a vertex $x$ to $v$, then $x\in A$ and since the label of $v$ is higher than any vertex of $A$, the adjacency retains. In all other cases, labels are not changed. Hence the relabeling matches the adjacency condition of any arc in $D$. This completes the proof.
\end{proof}

\noindent
The above characterization of acyclic bitournaments in terms of digraphs $D_S$ enables us to characterize acyclic bitournaments with Hamiltonian paths, determine number of non-isomorphic acyclic bitournaments of a given order, and solve the graph isomorphism problem for acyclic bitournaments in linear time. We show this in the following section.

\subsection{Acyclic Bitournaments}\label{sec:acycbit}

In this section, we study the class of acyclic bitournaments (or, equivalently bitransitive bitournamemts). First, we show that an acyclic bitournament with a Hamiltonian path is unique (up to isomorphism) for a given order. Next, we show that the class of acyclic bitournaments can be given an ``encoding'' such that distinct (non-isomorphic) graphs from the class have distinct codes. This encoding enables us to count the number of non-isomorphic acyclic bitournaments and to check in linear time whether two given acyclic birtournaments are isomorphic.

\begin{thm}\label{bitham}
An acyclic bitournament $D$ with $n$ vertices has a Hamiltonian path if and only if $D$ is isomorphic to $D_{[n]}$, where $[n]:=\set{1,2,\ldots, n}$. 
\end{thm}

\begin{proof}
The ``if'' direction is immediate from the definition of $D_S$ (Definition \ref{DS}). In $D_{[n]}$, we have the Hamiltonian path $1\longrightarrow 2\longrightarrow \cdots \longrightarrow n$.

\vspace{1em}\noindent
For the ``only if'' direction, let $D$ be an acyclic bitournament which has a Hamiltonian path. By Theorem \ref{bitbit} (v), $D\cong D_S$ for some nonempty set $S\subseteq \Nat$. Hence, $D_S$ has a Hamiltonian path, say $a_1\longrightarrow a_2 \longrightarrow \ldots \longrightarrow a_n$. Now, in $D_S$, for every arc $xy$, we have $x<y$ and $x \not\equiv_2 y$. Thus, $a_i<a_{i+1}$ and $a_i \not\equiv_2 a_{i+1}$ for each $i=1,\ldots,n-1$. Therefore, all elements in the set $\{a_i \mid i \text{ is odd}\}$ have the same parity while all elements in the set $\{a_i \mid i \text{ is even}\}$ have the opposite parity, i.e., $i \not\equiv_2 j \iff a_i \not\equiv_2 a_j$. Let us map $i$ in $D_{[n]}$ to $a_i$ in $D_S$. We have 
\[i\rightarrow j \text{ in } D_{[n]}\iff i<j \text{ and } i \not\equiv_2 j \iff a_i < a_j \text{ and } a_i \not\equiv_2 a_j \iff a_i\rightarrow a_j \text{ in } D_S\] 
Hence, this is an isomorphism and $D_S \cong D_{[n]}$, i.e., $D\cong D_{[n]}$.
\end{proof}

\noindent
We now define a function that we shall use to encode acyclic bitournaments.
\begin{defn}[Function $\beta_S$]\label{def:scalingfunc}
Given a nonempty set $S\subseteq \Nat$, define the ``scaling'' function
$\beta_S$ as follows. Let the increasing order of the natural
numbers in $S$ be given by
$\langle a_1,a_2,\ldots, a_n \rangle$. Then, $\Map{\beta_S}{S}{\Nat}$ is defined inductively as $\beta_S(a_1)=1$, and for $i\geq 2$, $\beta_S(a_i) = \beta_S(a_{i-1})+1$ if $a_i \not\equiv_2 a_{i-1}$ and $\beta_S(a_i) = \beta_S(a_{i-1})+2$ if $a_i \equiv_2 a_{i-1}$.
\end{defn} 

\noindent
For a nonempty set $S\subseteq \Nat$ with $\langle a_1,a_2,\ldots, a_n \rangle$ being the increasing order of its elements, we define its ``scaled'' set $\Sscl$ as \[\Sscl:=\{ \beta_S(a_1), \beta_S(a_2),\ldots, \beta_S(a_n)\}\]

\begin{obs}\label{obs:scaleiso}
$D_S$ is isomorphic to $D_{\Sscl}$.
\end{obs}

\begin{proof}
For a digraph $D_S$, let the increasing order of the elements in $S$ be given by $\langle a_1,a_2,\ldots, a_n \rangle$ and that in $\Sscl$ be given by $\langle b_1, b_2,\ldots, b_n \rangle$. Then, by definition of $\Sscl$, for any $i,j\in [n]$, we have $b_i < b_j$ if and only if $a_i < a_j$ and $b_i \not\equiv_2 b_j$ if and only if $a_i \not\equiv_2 a_j$. Hence, by Definition \ref{DS}, $D_S \cong D_{\Sscl}$.
\end{proof}

\noindent
The following lemma enables us to give a unique code to each acyclic bitournament.

\begin{lem}\label{lem:codeuniq}
For two nonempty sets $S,T\subseteq \Nat$, $D_S$ is isomorphic to $D_T$ if and only if $\Sscl=\Tscl$. 
\end{lem}

\begin{proof}
One direction is obvious. $\Sscl=\Tscl\Longrightarrow D_{\Sscl}\cong D_{\Tscl}\Longrightarrow D_S\cong D_T$ (by Observation \ref{obs:scaleiso}).

\vspace{1em}\noindent
For the other direction, we proceed as follows. For a nonempty set $S$, denote the increasing order of its elements by $\langle a_1,\ldots,a_n\rangle$. Let $k$ be the integer such that $a_1,\ldots,a_k$ all have the same parity but $a_{k+1}$ has the opposite parity. Let $S':=S\setminus\{a_1\ldots,a_k\}$. We claim that $\Sscl=\{1,\ldots,2k-1\}\cup\{s'+2k-1: s'\in \Spscl\}$. 

\vspace{1em}\noindent
First note that by definition, $\beta_S(a_i)=2i-1$ for each $i\in [k]$. Let the increasing order of elements in $\Spscl$ be $\langle s'_1,\ldots,s'_{n-k}\rangle$. Now, we prove by induction that $\beta_S(a_{k+i})=2k-1+s'_i$ for each $i\in [n-k]$. Since $a_{k+1}\not\equiv_2 a_k$, we have $\beta_S(a_{k+1})=\beta_S(a_k)+1=2k-1+s'_1$. Therefore, the base case holds. Now suppose
$\beta_S(a_{k+j})=2k-1+s'_j$ for some $j\geq 1$. Then, if $a_{k+j}\not\equiv_2 a_{k+j+1}$, we have \[\beta_S(a_{k+j+1})=\beta_S(a_{k+j})+1=2k-1+s'_j+1=2k-1+\beta_{S'}(a_{k+j})+1=2k-1+\beta_{S'}(a_{k+j+1})=2k-1+s'_{j+1}\,.\]
Again, if $a_{k+j}\equiv_2 a_{k+j+1}$, we have \[\beta_S(a_{k+j+1})=\beta_S(a_{k+j})+2=2k-1+s'_j+2=2k-1+\beta_{S'}(a_{k+j})+2=2k-1+\beta_{S'}(a_{k+j+1})=2k-1+s'_{j+1}\,.\]
Thus, we have proven the claim by induction. Next, suppose $D$ is isomorphic to $D_S$ and $D_T$ for two sets $S$ and $T$. Thus, $D_S\cong D_T$. Hence, $|S|$ must equal $|T|$. We prove that $\Sscl=\Tscl$ by induction on $n=|S|=|T|$. The base case holds for $n=2$, since there is only one nonempty bitournament on two vertices which is a single arc, and $\Sscl=\Tscl=\{ 1,2\}$. Now suppose the result is true for all $m\leq n-1$. 

\vspace{1em}\noindent
Since $D_S$ and $D_T$ are acyclic, they must have nonzero {\em source} vertices, i.e., vertices with in-degree $0$. Again, since they are isomorphic, they must have the same number (say $k$) of source vertices. Note that since these are source vertices, they must have the least values in $S$ and $T$ by Definition \ref{DS} and all of them must have the same parity. Let the digraphs obtained by deleting the source vertices from each of $D_S$ and $D_T$ be isomorphic to $D_{S'}$ and $D_{T'}$ respectively. Then since $D_S\cong D_T$, we have $D_{S'}\cong D_{T'}$. Therefore, since $|S'|=|T'|=n-k$, by the induction hypothesis, we have $\Spscl=\Tpscl$. By the claim above, we have 
\[\Sscl = \{1,\ldots,2k-1\}\cup\{s'+2k-1: s'\in \Spscl\} = \{1,\ldots,2k-1\}\cup\{t'+2k-1: t'\in \Tpscl\} = \Tscl\, .\]
This completes the proof.
\end{proof}


\noindent
We are now ready to define the $\code$ of an acyclic bitournament.
\begin{defn}[$\code(D)$]\label{codedef}
Given an acyclic bitournament $D$, define $\code(D)$ as the sequence obtained by taking the elements of $\Sscl$ in increasing order, where $S$ is a set such that $D \cong D_S$.   
\end{defn}

\noindent
Note that by Lemma \ref{lem:codeuniq}, $\code(D)$ is a well-defined function. It then follows that $\code$ of an acyclic bitournament is unique up to isomorphism. 

\begin{lem}\label{lem:codeiso}
Two acyclic bitournaments $D_1$ and $D_2$ are isomorphic if and only if $\code(D_1)=\code(D_2)$.
\end{lem}

\begin{proof}
Let $D_1$ and $D_2$ be isomorphic to $D_S$ and $D_T$ for some nonempty sets $S, T \subseteq \Nat$ respectively. Then \[ D_1\cong D_2 \Longleftrightarrow D_S\cong D_T\Longleftrightarrow \Sscl =\Tscl\ (\text{\footnotesize{by Lemma \ref{lem:codeuniq}}})\Longleftrightarrow \code(D_1)=\code(D_2)\ (\text{\footnotesize{by Definition \ref{codedef}}}).\]\end{proof}
%
%

\noindent
Now, we shall use this encoding of the class of acyclic bitournaments to count of the number of non-isomorphic acyclic bitournaments of a given order.
\begin{thm}\label{bitiso}
Let $\alpha$ be the number of non-isomorphic acyclic bitournaments $D=(X,Y,E)$. Then, $\alpha = \binom{2n-1}{n}$ when $|X|=|Y|=n$ and $\alpha = \binom{m+n}{n}$ when $|X|=m\neq n=|Y|$.
\end{thm}

\begin{proof} Consider the case that the acyclic bitournament is $D=(X,Y,E)$ with $|X|=|Y|=n$. By Lemma \ref{lem:codeiso}, we see that the number of non-isomorphic acyclic bitournaments is the number of distinct \code s. Note that $\code(D)=\langle a_1,\ldots,a_{2n}\rangle$ has $a_1=1$ and must contain $n$ even numbers and $n$ odd numbers as $|X|=|Y|=n$. For $i\in [n-1]$ let $k_i$ denote the number of even integers between the $i$th and the $i+1$th odd numbers in the sequence $\langle a_1,\ldots,a_{2n}\rangle$, and $k_n$ be the number of even numbers after the $n$th odd number. We claim that two $\code$s $\langle a_1,\ldots,a_{2n}\rangle$ and $\langle a'_1,\ldots,a'_{2n}\rangle$ differ if and only if their corresponding sequences $\langle k_1, \ldots, k_n\rangle$ and $\langle k'_1, \ldots, k'_n\rangle$ differ.

\vspace{1em}\noindent
Let $a=\langle a_1,\ldots,a_{2n}\rangle$ and $a' = \langle a'_1,\ldots,a'_{2n}\rangle$. If $a=a'$, then clearly $\langle k_1, \ldots, k_n\rangle=\langle k'_1, \ldots, k'_n\rangle$. Now suppose that $\langle k_1, \ldots, k_n\rangle=\langle k'_1, \ldots, k'_n\rangle$. Then for each $i$, we prove that the two sets of numbers from the $i$th to $i+1$th odd numbers in the respective sequences $a$ and $a'$ are equal. The base case for $i=0$ holds as $a_1=a'_1=1$. Suppose it holds for some $i\geq 0$. Let the $i+1$th odd number in $a$ and $a'$ be $a_j$ and $a'_j$ respectively. Now, we have $k_{i+1} = k'_{i+1}$. If $k_{i+1}=k'_{i+1}=0$, then, by definition, there is no even number between $a_j$ (equivalently $a'_j$) and the next odd number in the sequence. Thus, $a_{j+1}$ (equivalently $a'_{j+1}$) must be odd. Since $a_{j+1}\in \{a_j + 1, a_j+2\}$ and $a'_{j+1}\in \{a'_j + 1, a'_j+2\}$, we must have $a_{j+1}=a_j+2=a'_j+2=a'_{j+1}$, where $a_{j+1}$ and $a'_{j+1}$ are the $i+2$th odd numbers in $a$ and $a'$ respectively. Now consider the case when $k_{i+1}=k'_{i+1}>0$. Then, there are $k_{i+1}$ even numbers in $a$ (resp. $a'$) between $a_j$ (resp. $a'_j$) and the next odd number. These numbers must be $a_j+1, a_j+3, \ldots, a_j+2k_{i+1}-1$ and $a'_j+1, a'_j+3, \ldots, a'_j+2k_{i+1}-1$. The next odd number must then be $a_j+2k_{i+1}$ and $a'_j+2k_{i+1}$ respectively. Since $a_j=a'_j$, these two sets of numbers are equal. We have $a_{j+1}=a'_{j+1}$. Hence, it follows by induction that for all $0\leq i\leq n-1$, the numbers from the $i$th to $i+1$th odd numbers are equal in the sequences $a$ and $a'$. Also since $\langle k_1, \ldots, k_n\rangle=\langle k'_1, \ldots, k'_n\rangle$, we have $a=a'$. 

\vspace{1em}\noindent
Therefore, the number of distinct codes is the number of such sequences $\langle k_1, \ldots, k_n\rangle$. We see that the only constraints on $k_i$ are that they are non-negative and $\displaystyle{\sum_{i=1}^n k_i=n}$. Recall that the number of non-negative integer solutions to the equation $\displaystyle{\sum_{i=1}^r x_i=s}$ is $\binom{s+r-1}{s}=\binom{s+r-1}{r-1}$. Hence, there are $\binom{2n-1}{n}$ such sequences, i.e., $\binom{2n-1}{n}$ distinct codes, and hence there are $\binom{2n-1}{n}$ non-isomorphic acyclic bitournaments with partite sets of size $n$. 

\vspace{1em}\noindent
For the case when $|X|=m$ and $|Y|=n$ with $n\neq m$, $\code(D)=\langle a_1,\ldots,a_{n+m}\rangle$ has $a_1=1$ and either $n$ odd numbers and $m$ even numbers or vice versa. Then, by similar argument as above, the number of distinct codes is the number of sequences $\langle k_1, \ldots, k_n\rangle$ such that each $k_i\geq 0$ and $\displaystyle{\sum_{i=1}^n k_i}=m$ plus the number of sequences $\langle k'_1, \ldots, k'_m\rangle$ such that each $k'_i\geq 0$ and $\displaystyle{\sum_{i=1}^m k'_i=n}$. This is equal to $\binom{m+n-1}{n-1}+\binom{m+n-1}{n}=\binom{m+n}{n}$ (by Pascal's identity). Hence, there are $\binom{m+n}{n}$ non-isomorphic bitournaments in this case.
\end{proof}

\begin{figure}[ht]
\begin{center}
\includegraphics[scale=0.7]{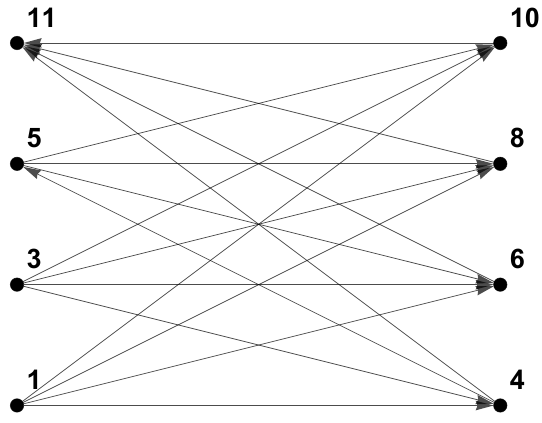}
\caption{An acyclic bitournament with its \texttt{code} $\langle 1,3,4,5,6,8,10,11\rangle$.}\label{graph3}
\end{center}
\end{figure}

\noindent
Finally, we give a linear time algorithm to check isomorphism between two given acyclic bitournaments.
\begin{thm}
There is a linear time algorithm for deciding whether two acyclic bitournaments are isomorphic.
\end{thm}

\begin{proof}
Suppose we are given two acyclic bitournaments $D_1$ and $D_2$ as input and we need to check whether $D_1\cong D_2$. We describe an algorithm. For each of the digraphs, we do the following. Topologically sort it and obtain an ordering $\langle v_1,\ldots,v_n\rangle$ of the vertices. We set labels to the vertices using a function $\ell$ as follows. Set $\ell(v_1)=1$. For $i\geq 2$, if $v_{i-1}$ has an arc to $v_i$, then set $\ell(v_i)=\ell(v_{i-1})+1$. Otherwise, set $\ell(v_i)=\ell(v_{i-1})+2$. Call the sequences $\langle \ell(v_1),\ldots,\ell(v_n) \rangle$ obtained for $D_1$ and $D_2$ as $d_1$ and $d_2$ respectively. We decide that $D_1\cong D_2$ if and only if $d_1=d_2$.

\vspace{1em}\noindent
We first argue the correctness. Let $S$ be the set $\{\ell(v_1),\ldots,\ell(v_n)\}$. Since $D$ is an oriented bipartite graph, we have assigned odd numbers to all vertices in the partite set containing $v_1$ and even numbers to all vertices in the other partite set. Thus for any arc $(v_i,v_j)$, we have $\ell(v_i)\not\equiv_2\ell(v_j)$. Again, because of the topological sorting, if $D$ has an arc $(v_i,v_j)$, then $i<j$. Our construction ensures that if $i<j$, then $\ell(v_i)<\ell(v_j)$. Hence, for each arc $(v_i,v_j)$, we have $\ell(v_i)<\ell(v_j)$ and $\ell(v_i)\not\equiv_2\ell(v_j)$. Therefore, $D\cong D_S$. It follows from Definition \ref{def:scalingfunc} and the above construction that $\Sscl=S$. Therefore, $d_1=\code(D_1)$. By similar argument, $d_2=\code(D_2)$. Thus, by Theorem \ref{bitiso}, we must have $D_1\cong D_2$ if and only if $d_1=d_2$.

\vspace{1em}\noindent
Let us now analyze the runtime. Let $n=|V(D_1)|=|V(D_2)|$ and $m=|E(D_1)|=|E(D_2)|$. For digraphs $D_1$ and $D_2$, each topological sort takes $O(n+m)$. Constructing the labels for each digraph takes $O(n)$ time if the input is given in adjacency matrix form. If the input is of the form of adjacency list, then the construction of labels takes $\displaystyle{\sum_{i=2}^n \text{out-degree}(v_{i-1})=O(m).}$ Finally checking whether the $\code$s are same takes $O(n)$ time. Hence, we get an $O(n+m)$ time, i.e., a linear time algorithm.
\end{proof}

\subsection{Caccetta-H$\ddot{\textrm{a}}$ggkvist Conjecture}\label{sec:ch}

Here, we note that a conjecture for general directed graphs can be solved to some extent for directed bipartite graphs. The Caccetta-H$\ddot{\textrm{a}}$ggkvist Conjecture states: ``Every simple digraph of order $n$ with minimum outdegree at least $r$ has a cycle of length at most $\lceil n/r\rceil$.''
The conjecture has been proved for $r\leq \sqrt{n/2}$ by Shen \cite{shen}. For $r \geq n/2$ it is trivial since that means number of arcs in the graph is at least $n^2/2 > \binom{n}{2}$, which implies the presence of a 2-cycle. But it is still open for $r=n/3,n/4,n/5$ and so on.

\vspace{1em}
\noindent
We consider the conjecture for directed bipartite graphs. For any $r<n$, if there exists a $2$-cycle, we are done. So we can assume that the graphs are oriented bipartite graphs. Let $D=(X,Y,E)$ be an oriented bipartite graph with partite sets $X$ and $Y$, where $|X|=n_1$ and $|Y|=n_2$ ($n_1,n_2\geq 1$), and $E$ is the set of arcs. Let $V = X\cup Y$ be the set of vertices of $D$ with $|V|=n=n_1+n_2$. Consider the conjecture for $r = n/3$. Since an oriented bipartite graph does not have a $3$-cycle, the conjecture implies the following:

\begin{prop}
There exists no oriented bipartite graph of order $n$ with minimum outdegree at least $n/3$.
\end{prop}

\begin{proof}
Suppose $d^{+}(v) \geq n/3 = \frac{n_1+n_2}{3}$ $\forall v\in V$. Then $|E|=\sum\limits_{v\in V} d^{+}(v) \geq \frac{(n_1+n_2)^2}{3} \geq \frac{4n_1n_2}{3} > n_1n_2$ which is a contradiction since $|E| \leq n_1n_2$. 
\end{proof}

\noindent
Now we have the following improvement of the above result.

\begin{prop} 
There exists no oriented bipartite graph of order $n$ with minimum outdegree $> n/4$.
\end{prop}

\begin{proof}
 If $\forall v\in V$, $d^{+}(v) \geq n/4$ and $\exists v_0 \in V$ such that $d^{+}(v_0) > n/4$, then $|E|=\sum\limits_{v\in V} d^{+}(v) > \frac{(n_1+n_2)^2}{4} \geq n_1n_2$ which is again a contradiction as before.
\end{proof}

\noindent
Thus, the above proposition can be restated as the following.

\begin{cor}
In any oriented bipartite graph of order $n$, there exists a vertex with outdegree at most $n/4$.
\end{cor}

\vspace{1em}
\noindent
Now it follows that in an oriented bipartite graph with minimum outdegree $n/4$, every vertex has outdegree exactly $n/4$. Then, $n_1n_2 \geq |E| = n^2/4 =  (n_1+n_2)^2/4 \geq n_1n_2$. Therefore, $|E|= n_1n_2 = (n_1+n_2)^2/4$ and hence, $n_1=n_2$. Thus, we see that $D$ is an oriented \textit{complete} bipartite graph, i.e., a bitournament with $|X|=|Y|$. Note that since $n/4=n_1/2$ is an integer (the exact outdegree of each vertex), $n_1$ must be even. Since the underlying undirected bipartite graph is complete, the in-degree of each vertex must also be $n_1/2$. Therefore, for $r=n/4$, the Caccetta-H$\ddot{\textrm{a}}$ggkvist conjecture for oriented bipartite digraphs can be restated as the following.

\begin{thm}\label{tn4}
Let $D=(X,Y,E)$ be a bitournament with $|X|=|Y|=2m$ and $d^{+}(v)=d^{-}(v)=m$ $\forall v \in V=X\cup Y$. Then $D$ contains a 4-cycle.
\end{thm}

\begin{proof}
Consider any 2-path $u \rightarrow v \rightarrow w$ in $D$ where $u,w \in X$ and $v \in Y$. Let $N(w) \subset Y$ be the set of $m$ out-neighbors of $w$. All vertices in $N(w)$ cannot be out-neighbors of $u$, otherwise $d^+(u)\geq |N(w)\cup \{v\}| = m+1$ which is a contradiction. Hence $\exists x \in N(w)$ such that $x$ is not an out-neighbor of $u$ and hence an in-neighbor of $u$. (Since every vertex in $Y$ is either an in-neighbor or an out-neighbor of $u$). Thus we have the 4-cycle $u \rightarrow v \rightarrow w \rightarrow x \rightarrow u$. 
\end{proof}

\begin{rem}
Note that in the case of Theorem \ref{tn4}, $D$ cannot be bitransitive by Theorem \ref{bitbit}.
\end{rem}

\noindent
We now prove the conjecture for the case $r=n/5$. Since a bipartite graph cannot have a 5-cycle, the case for $r=n/5$ can be restated as: ``An oriented bipartite graph $(X_0,X_1,E)$ with $|X_0 \cup X_1| = n$ and minimum out-degree at least $n/5$ has a directed $4$-cycle.''
 
\noindent
We use some notations: for $i \geq 0$, let $N_{i}(v)$ denote the $i$th neighborhood of a vertex $v$, i.e., the set of vertices which are at distance $i$ from $v$, and let $N_{-1}(v)$ denote the set of in-neighbors of the vertex $v$.

\begin{lem}\label{1stlem}
In a bipartite graph with bipartition $(X_0, X_1)$ and minimum out-degree at least $n/5$, if for some $i\in \{0,1\}$, $|X_{i}|\le \alpha_1 n$ and $|X_{1-i}| \ge \alpha_2 n$, then there exists $v \in X_{i}$ such that $|N_{-1}(v)| \geq \dfrac{\alpha_2}{5\alpha_1}n$.
\end{lem}

\begin{proof}
Since minimum out-degree of a vertex is at least $n/5$, there are at least $\alpha_2 n^2/5$ outgoing arcs from $X_{1-i}$, which are ``received'' by at most $\alpha_1n$ vertices in $X_i$. Hence, by pigeon-hole principle, there exists a vertex $v \in X_i$ which ``receives'' at least $\dfrac{\alpha_2n^2/5}{\alpha_1n}$ many arcs. Thus, $|N_{-1}(v)| \geq \dfrac{\alpha_2}{5\alpha_1}n$.
\end{proof}

\begin{lem}\label{2ndlem}
Let $G=(X_0, X_1, E)$ be an oriented bipartite graph that does not contain a $4$-cycle and has minimum out-degree at least $n/5$. Let $v$ be a vertex in $X_i$, for some $i\in \{0,1\}$, such that $|N_{-1}(v)| \geq \alpha n$. Then 

\begin{enumerate}
\item[(i)] $|N_1(v)\cup N_3(v)| \leq |X_{1-i}| - \alpha n$
\item[(ii)] $|N_2(v)| \geq \dfrac{0.04n}{\frac{|X_{1-i}|}{n} - \alpha - 0.2}$
\end{enumerate}
\end{lem}

\begin{proof} 
Note that if $N_3(v) \cap N_{-1}(v) \neq \emptyset$, then there is a directed $4$-cycle, which is a contradiction. Since the graph is oriented, we also have $N_1(v) \cap N_{-1}(v) \neq \emptyset$. Thus, $(N_1(v)\cup N_3(v)) \cap N_{-1}(v) = \emptyset$. Hence, $|N_1(v)\cup N_3(v)| \leq |X_{1-i}| - |N_{-1}(v)| \leq |X_{1-i}| - \alpha n$, which proves (i).

\vspace{1em}
\noindent
Now, consider the graph $G'$ induced by $N_2(v) \cup (N_1(v) \cup N_3(v))$. Since it is oriented, the number of arcs in $G'$ is at most $|N_2(v)||N_1(v) \cup N_3(v)| \leq |N_2(v)| (|X_{1-i}| - \alpha n)$. Again, the number of arcs in $G'$ is at least the number of arcs ``exiting'' $N_2(v)$ and $N_1(v)$, which is at least $(|N_1(v)|+|N_2(v)|)\dfrac{n}{5} \geq |N_2(v)|\dfrac{n}{5} + \dfrac{n^2}{25}$. Thus, we get the inequality $|N_2(v)|\dfrac{n}{5} + \dfrac{n^2}{25} \leq |N_2(v)|(|X_{1-i}| - \alpha n)$, which gives $|N_2(v)|\geq \dfrac{0.04n}{\frac{|X_{1-i}|}{n} - \alpha - 0.2}$, and this proves (ii).
\end{proof}

\noindent
Now we invoke Lemma \ref{1stlem} and Lemma \ref{2ndlem} repeatedly to prove the following theorem:

\begin{thm}\label{n5thm}
An oriented bipartite graph $G=(X_0,X_1,E)$ with $|X_0 \cup X_1| = n$ and minimum out-degree at least $n/5$ has a directed $4$-cycle.
\end{thm}

\begin{proof}
Assume that $G$ does not contain a directed $4$-cycle. WLOG, let $|X_0|\leq |X_1|$. We prove the theorem by considering the following cases:

\vspace{1em}
\noindent
\textbf{Case 1.} $|X_1|\geq 0.75n$.

\noindent
Note that $|X_0| \leq 0.25n$. By Lemma \ref{1stlem}, $\exists v\in X_0$ such that $|N_{-1}(v)| \geq 0.6n$. Now, $|N_1(v)| \geq n/5$ and $N_1(v) \cap N_{-1}(v) = \emptyset$ since the graph is oriented. Again, $|N_2(v)| \geq n/5$ and $v\not\in N_2(v)$. Thus $|V| \geq |N_{-1}(v)\cup N_1(v)\cup N_2(v) \cup\{v\}| \geq 3n/5 + n/5 + n/5 + 1 = n+1$, which is a contradiction.

\vspace{1em}
\noindent
\textbf{Case 2.} $0.65n \leq |X_1| < 0.75n$.

\noindent
Note that $0.25n < |X_0| \leq 0.35n$. By Lemma \ref{1stlem}, $\exists v\in X_0$ such that $|N_{-1}(v)| > 0.371n$. Then, by Lemma \ref{2ndlem}, $|N_1(v)\cup N_3(v)| < 0.379n$ and $|N_2(v)| > 0.223n$. Again, by applying Lemma \ref{1stlem} on the induced bipartite graph with bipartition $(N_1(v)\cup N_3(v), N_2(v))$, $\exists u \in N_1(v)\cup N_3(v) \subset X_1$ such that $|N_{-1}(u)| > 0.117n$. Then, by Lemma \ref{2ndlem}, $|N_2(u)| > \frac{0.04n}{0.033} > n$, which is a contradiction.

\vspace{1em}
\noindent
\textbf{Case 3.} $0.6n \leq |X_1| < 0.65n$.

\noindent
Note that $0.35n < |X_0| \leq 0.4n$. By Lemma \ref{1stlem}, $\exists v\in X_0$ such that $|N_{-1}(v)| \geq 0.3n$. Then, by Lemma \ref{2ndlem}, $|N_1(v)\cup N_3(v)| \leq 0.35n$ and $|N_2(v)|> 0.26n$. Again, by applying Lemma \ref{1stlem} on the induced bipartite graph with bipartition $(N_1(v)\cup N_3(v), N_2(v))$, $\exists u \in N_1(v)\cup N_3(v) \subset X_1$ such that $|N_{-1}(u)| > 0.14n$. Then, by Lemma \ref{2ndlem}, $|N_2(u)| > 0.66$n, which is a contradiction since $N_2(u) \subset X_1$ and $|X_1|< 0.65n$.

\vspace{1em}
\noindent
\textbf{Case 4.} $0.56n \leq |X_1| < 0.6n$.

\noindent
Note that $0.4n < |X_0| \leq 0.44n$. By Lemma \ref{1stlem}, $\exists v\in X_0$ such that $|N_{-1}(v)| > 0.254n$. Then, by Lemma \ref{2ndlem}, $|N_1(v)\cup N_3(v)| < 0.35n$ and $|N_2(v)|> 0.27n$. Again, by applying Lemma \ref{1stlem} on the induced bipartite graph with bipartition $(N_1(v)\cup N_3(v), N_2(v))$, $\exists u \in N_1(v)\cup N_3(v) \subset X_1$ such that $|N_{-1}(u)| > 0.154n$. Then, by Lemma \ref{2ndlem}, $|N_1(v)\cup N_3(v)| < 0.286n$ and $|N_2(u)| > 0.465$n. By applying Lemma \ref{1stlem} on the induced bipartite graph with bipartition $(N_1(u)\cup N_3(u), N_2(u))$, $\exists w \in N_1(u)\cup N_3(u) \subset X_0$ such that $|N_{-1}(w)| > 0.339n$. Then, by Lemma \ref{2ndlem}, $|N_2(w)| > 0.655$n, which is a contradiction since $N_2(w) \subset X_0$ and $|X_0|\leq 0.44n$.

\vspace{1em}
\noindent
\textbf{Case 5.} $0.53n \leq |X_1| < 0.56n$.

\noindent
Note that $0.44n < |X_0| \leq 0.47n$. By Lemma \ref{1stlem}, $\exists v\in X_0$ such that $|N_{-1}(v)| > 0.225n$. Then, by Lemma \ref{2ndlem}, $|N_1(v)\cup N_3(v)| < 0.335n$ and $|N_2(v)|> 0.296n$. Again, by applying Lemma \ref{1stlem} on the induced bipartite graph with bipartition $(N_1(v)\cup N_3(v), N_2(v))$, $\exists u \in N_1(v)\cup N_3(v) \subset X_1$ such that $|N_{-1}(u)| > 0.176n$. Then, by Lemma \ref{2ndlem}, $|N_1(u)\cup N_3(u)| < 0.294n$ and $|N_2(u)| > 0.425n$. By applying Lemma \ref{1stlem} on the induced bipartite graph with bipartition $(N_1(u)\cup N_3(u), N_2(u))$, $\exists w \in N_1(u)\cup N_3(u) \subset X_0$ such that $|N_{-1}(w)| > 0.289n$. Then, by Lemma \ref{2ndlem}, $|N_2(w)| > 0.563n$, which is a contradiction since $N_2(w) \subset X_0$ and $|X_0|\leq 0.47n$.

\vspace{1em}
\noindent
\textbf{Case 6.} $0.5n \leq |X_1| < 0.53n$.

\noindent
Note that $0.47n < |X_0| \leq 0.5n$. By Lemma \ref{1stlem}, $\exists v\in X_0$ such that $|N_{-1}(v)| \geq 0.2n$. Then, by Lemma \ref{2ndlem}, $|N_1(v)\cup N_3(v)| < 0.33n$ and $|N_2(v)|> 0.307n$. Again, by applying Lemma \ref{1stlem} on the induced bipartite graph with bipartition $(N_1(v)\cup N_3(v), N_2(v))$, $\exists u \in N_1(v)\cup N_3(v) \subset X_1$ such that $|N_{-1}(u)| > 0.186n$. Then, by Lemma \ref{2ndlem}, $|N_1(u)\cup N_3(u)| < 0.314n$ and $|N_2(u)| > 0.35n$. By applying Lemma \ref{1stlem} on the induced bipartite graph with bipartition $(N_1(u)\cup N_3(u), N_2(u))$, $\exists w \in N_1(u)\cup N_3(u) \subset X_0$ such that $|N_{-1}(w)| > 0.222n$. Then, by Lemma \ref{2ndlem}, $|N_1(w)\cup N_3(w)| < 0.308n$ and $|N_2(w)| > 0.37n$. By applying Lemma \ref{1stlem} on the induced bipartite graph with bipartition $(N_1(w)\cup N_3(w), N_2(w))$, $\exists x \in N_1(w)\cup N_3(w) \subset X_1$ such that $|N_{-1}(x)| > 0.24n$. Then, by Lemma \ref{2ndlem}, $|N_2(x)| > 0.66n$, which is a contradiction since $N_2(x) \subset X_1$ and $|X_1|< 0.53n$.

\vspace{1em}
\noindent
Hence, in each case we get a contradiction, but one of them must hold. Thus, our assumption that there is no $4$-cycle in $G$ must be wrong. This completes the proof.
\end{proof}

\section{Odd-even Graphs}
In this section, we introduce a family of graphs that we call {\em odd-even} graphs. Throughout the section we denote the set of all non-negative even numbers by $\mathcal{E}$ and the set of all positive odd numbers by $\mathcal{O}$. We begin with the definition of oriented odd-even graphs.

\begin{defn}\label{oddeven}
Let $A \subseteq \mathcal{E}$ and $O \subseteq \mathcal{O}$. An \textit{oriented odd-even} graph $\overrightarrow{\mathcal{G}}_A(O)$ is an oriented graph with the set of vertices $A$ and with set of arcs $E = \Set{ab}{\frac{a+b}{2}, \frac{b-a}{2} \in O}$. 
\end{defn}

\noindent
Observe that $\overrightarrow{\mathcal{G}}_A(O)$ is an oriented bipartite graph with  partite sets $V_{1} = \Set{v \in A}{\Mod{v}{0}{4}}$ and $V_{2} = \Set{v \in A}{\Mod{v}{2}{4}}$ as both $\frac{a+b}{2}$ and $\frac{b-a}{2}$ are even for any pair of $a,b \in V_{i}$ and for  each $i \in \set{1,2}$. 

\begin{defn}\label{oegraph}
An \textit{odd-even} graph $\mathcal{G}_A(O)$ is the underlying (undirected) graph of $\overrightarrow{\mathcal{G}}_A(O)$, i.e., $\mathcal{G}_A(O)$ is a graph with set of vertices $A$ and with set of arcs $E = \Set{ab}{\frac{a+b}{2}, \frac{|a-b|}{2} \in O}$. 
\end{defn}

\noindent
From above, it is clear that $\mathcal{G}_A(O)$ is bipartite graph. Interestingly, the following theorem shows that every bipartite graph can be represented by an odd-even graph.

\begin{thm}\label{charbi}
Let $B$ be a bipartite graph. Then there exist $A\subseteq\mathcal{E}$ and $O\subseteq\mathcal{O}$ such that $\mathcal{G}_A(O)$ is isomorphic to $B$.
\end{thm}

\begin{proof} 
Let $B=(X,Y,E)$ be a bipartite graph with the partite sets $X$ and $Y$. Let $X = \set{b_0, b_2,\ldots,b_{2m}}$, $Y = \set{b_1, b_3,\ldots,b_{2n-1}}$ and $V=X\cup Y$. Now define a function $\Map{f}{V}{\mathcal{E}}$ with $f(b_i) = 10^{i+2} +1 + (-1)^{i + 1}$. It is easy to check that the function $f$ is well-defined and injective. Take the even set $A$ to be the image of $f$ and let the odd set $O = \Set{\frac{f(a)+f(b)}{2}, \frac{|f(a)-f(b)|}{2}}{ab \in E(B)}$. Now to show that $B$ is isomorphic to $\mathcal{G}_A(O)$  it is enough to observe that $f(x) + f(y) \neq f(x^\prime) + f(y^\prime)$, $f(x) + f(y) \neq |f(x^\prime) - f(y^\prime)|$, $|f(x) - f(y)| \neq f(x^\prime) + f(y^\prime)$ and $|f(x) - f(y)| \neq |f(x^\prime) - f(y^\prime)|$ for any $xy \in E(B)$ and $x^\prime y^\prime \notin E(B)$. 
\end{proof}

\noindent
Now from Definition \ref{oddeven} it is clear that $\overrightarrow{\mathcal{G}}_A(O)$ is acyclic, i.e., there is no directed cycle in $\overrightarrow{\mathcal{G}}_A(O)=(V,E)$ as for any arc $ab\in E$, $b-a>0$ and so $a<b$. Thus no come back to the starting vertex is possible in a directed walk. Therefore $\overrightarrow{\mathcal{G}}_A(O)$ is an acyclic oriented bipartite graph. In the following we will see that any acyclic oriented bipartite graph can be represented by an oriented odd-even graph. Let $D=(V,E)$ be a digraph. An ordering $u_1,u_2,\ldots,u_n$ of vertices of $D$ is a {\em topological ordering} (or, {\em acyclic ordering} \cite{JEN}) if for every arc $u_iu_j\in E$, we have $i<j$. It is known that every acyclic digraph has a topological ordering of vertices \cite{JEN}.

\begin{thm}\label{charbi2}
Let $B$ be an acyclic oriented bipartite graph. Then there exist $A\subseteq\mathcal{E}$ and $O\subseteq\mathcal{O}$ such that $\overrightarrow{\mathcal{G}}_A(O)$ is isomorphic to $B$.
\end{thm}

\begin{proof}
Let $B=(X,Y,E)$ be an acyclic oriented bipartite graph with the partite sets $X$ and $Y$ and $V=X\cup Y$. Let $u_1,u_2,\ldots,u_n$ be a topological ordering of $V$. Now we define a function $\Map{f}{V}{\mathcal{E}}$ inductively. Assign $f(u_1)=10^2$ or $10^3+2$ according as $u_1\in X$ or $u_1\in Y$. Suppose $f(u_i)=10^{2k}$ for some $k\in\Nat$. We assign $f(u_{i+1})=10^{2k+2}$ or $10^{2k+1}+2$ according as $u_{i+1}\in X$ or $u_{i+1}\in Y$. If $f(u_i)=10^{2k+1}+2$ for some $k\in\Nat$. We assign $f(u_{i+1})=10^{2k+2}$ or $10^{2k+3}+2$ according as $u_{i+1}\in X$ or $u_{i+1}\in Y$. The function $f$ is well-defined and is a strictly increasing function (hence injective). We take the even set $A$ to be the image of $f$ and let the odd set $O = \Set{\frac{f(a)+f(b)}{2}, \frac{f(b)-f(a)}{2}}{ab \in E(B)}$. Since $f$ is increasing, $f(a)<f(b)$ for all $ab\in E$ due to the topological ordering. Then it follows that $\overrightarrow{\mathcal{G}}_A(O)$ is isomorphic to $B$ (rest of the proof is similar to the proof of Theorem  \ref{charbi}).
\end{proof}

\noindent
Note that the above theorem can easily be extended to bipartite graphs with countably infinite number of vertices. Therefore, the family of  odd-even graphs is, in fact, the family of all bipartite graphs with countable number of vertices. Now we will prove some conditions for finite odd even graphs to be connected. For any odd-even graph $\mathcal{G}_A(O)$, let the \textit{relevant odd set} be  $O_{rel} = O \cap \{\frac{a+b}{2}, \frac{|a-b|}{2} \mid ab \in E \}$. Note that $\mathcal{G}_A(O)$ is isomorphic to $\mathcal{G}_A(O_{rel})$.

\begin{thm}\label{th con1}
If $\mathcal{G}_A(O)$ is connected with $|A| \geq 2$, then $\mid O_{rel} \mid \geq \sqrt{2\mid A\mid}-1$.
\end{thm}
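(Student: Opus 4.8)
The plan is to turn the problem into a counting argument: bound the number of edges from above by a function of $|O_{rel}|$ via an injective encoding, bound it from below using connectivity, and then solve the resulting inequality for $|O_{rel}|$. The crux is the following algebraic observation, which I would establish first. Suppose $\{a,b\}$ is an edge of $\mathcal{G}_A(O)$ with $a<b$, and set $s:=\frac{a+b}{2}$ and $d:=\frac{b-a}{2}$. By the very definition of the odd-even graph, both $s$ and $d$ lie in $O$, and since they arise from the edge $\{a,b\}$ they in fact lie in $O_{rel}$. Conversely, $a=s-d$ and $b=s+d$, so the pair $(s,d)$ recovers the edge. Hence the assignment $\{a,b\}\mapsto\{s,d\}$ is an injection of the edge set of $\mathcal{G}_A(O)$ into the set of (essentially unordered) pairs drawn from $O_{rel}$.

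From this injection I would read off the upper bound
$$|E(\mathcal{G}_A(O))|\ \le\ \binom{|O_{rel}|}{2},$$
noting that $s>d$ whenever $a>0$, so the only edges whose image has a repeated coordinate are those incident to the vertex $0$ (if $0\in A$); these form a star and I would absorb or dispatch them separately. For the lower bound I would invoke connectivity: a connected graph on $|A|\ge 2$ vertices has at least $|A|-1$ edges, giving $|A|-1\le |E(\mathcal{G}_A(O))|$. Chaining the two bounds yields $|A|-1\le\binom{|O_{rel}|}{2}$, that is $2(|A|-1)\le |O_{rel}|\bigl(|O_{rel}|-1\bigr)$, and translating this quadratic inequality into radical form delivers $|O_{rel}|>\sqrt{2|A|}$.

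The conceptual heart, which I would pin down before anything else, is the injective encoding $\{a,b\}\mapsto\bigl(\tfrac{a+b}{2},\tfrac{b-a}{2}\bigr)$ together with the verification that both coordinates genuinely belong to $O_{rel}$; once this is secured, everything else is routine. I expect the only real care to be needed in two places: the bookkeeping around equal coordinates (edges meeting the vertex $0$), and the passage from $\binom{|O_{rel}|}{2}\ge |A|-1$ to the \emph{strict} inequality, since the counting is tight for very small configurations. Accordingly I would check the boundary cases of small $|O_{rel}|$ explicitly to confirm the strictness, which is where the estimate is most delicate.
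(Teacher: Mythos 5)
Your proposal follows exactly the same route as the paper's proof: encode each edge $\{a,b\}$ by the pair $\bigl\{\frac{a+b}{2},\frac{|a-b|}{2}\bigr\}\subseteq O_{rel}$ to get $|E|\le\binom{|O_{rel}|}{2}$, use connectivity for $|E|\ge |A|-1$, and solve the resulting quadratic; the paper writes this with $k=|O_{rel}|$, $n=|A|$, and the factorization $\bigl(k-\tfrac{1+\sqrt{8n-7}}{2}\bigr)\bigl(k-\tfrac{1-\sqrt{8n-7}}{2}\bigr)\ge 0$.

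The difficulty is that the two items you set aside as routine bookkeeping cannot actually be carried out, because the statement itself fails at exactly those points; the paper's proof silently commits both errors. First, edges at the vertex $0$: if $0\in A$, then $\{0,b\}$ is an edge as soon as the single odd number $\frac{b}{2}$ lies in $O$, so such edges map to singletons rather than two-element subsets, and the bound $|E|\le\binom{|O_{rel}|}{2}$ is false. No separate dispatching can repair this, since there are genuine counterexamples: with $A=\{0,6\}$ and $O=\{3\}$ the graph is connected (both $\frac{0+6}{2}$ and $\frac{6-0}{2}$ equal $3$) yet $|O_{rel}|=1\not>\sqrt{4}$; with $A=\{0,2,6\}$ and $O=\{1,3\}$ one gets a connected path with $|O_{rel}|=2<\sqrt{6}$. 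Second, strictness: even with $0\notin A$, for $|A|=2$ a connected graph is a single edge, which produces exactly two relevant odd numbers, so $|O_{rel}|=2=\sqrt{2|A|}$ (take $A=\{2,4\}$, $O=\{1,3\}$); correspondingly, the paper's claim that $\frac{1+\sqrt{8n-7}}{2}>\sqrt{2n}$ for $n\ge 2$ is in fact an equality at $n=2$. So your instinct about where the delicacy lies is sharper than the paper's own write-up, which asserts both steps without comment; but as written, your argument (like the paper's) proves the theorem only under the extra hypotheses $0\notin A$ and $|A|\ge 3$. If $0\in A$ the best the counting method gives is $|E|\le\binom{|O_{rel}|+1}{2}$, hence only $|O_{rel}|\ge\frac{\sqrt{8|A|-7}\,-1}{2}$, which is weaker than the stated bound.
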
 

\begin{proof}
Suppose $|A| = n$ and $|O_{rel}| = k$. Now, the number of edges in $\mathcal{G}_A(O)$ is at least $n-1$ (since $\mathcal{G}_A(O)$ is connected) and at most $\binom{k}{2}+k=\frac{k(k+1)}{2}$. This is because each edge $ab$ corresponds to either a pair of numbers $(\frac{a+b}{2}, \frac{|a-b|}{2}) \in O_{rel}$ or a single number $\frac{a}{2}\in O_{rel}$ (in case $b=0$). Thus, 

\begin{align}\label{eqn face equal}\nonumber
 \frac{k(k+1)}{2} \geq n-1  &\Rightarrow \left(k - \frac{\sqrt{8n - 7}-1}{2}\right)\left(k + \frac{\sqrt{8n-7}+1}{2}\right) \geq 0\\ \nonumber
&\Rightarrow k - \frac{\sqrt{8n - 7}-1}{2} \geq 0 &&(\text{since } k + \frac{\sqrt{8n-7}+1}{2} > 0)\\ \nonumber
&\Rightarrow k \geq \frac{\sqrt{8n - 7}-1}{2} \geq \sqrt{2n} -1 &&(\text{ for } n \geq 2)\\  \nonumber
&\Rightarrow |O_{rel}| \geq \sqrt{2\mid A\mid}-1.\nonumber
\end{align}
\end{proof} 

\begin{thm}\label{th con2}
Suppose $A = \left\{ 0,2,4,...,2(m-1) \right\}$. If $| O_{rel} | \geq \frac{7|A|}{8}$, then $\mathcal{G}_A(O)$ is connected.
\end{thm}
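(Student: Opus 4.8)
The plan is to argue the contrapositive: assuming $\mathcal{G}_A(O)$ is disconnected, I will show $|O_{rel}| \le \frac{3|A|}{4}$. Put $m=|A|$ and relabel the vertices $0,2,\dots,2(m-1)$ by their indices $0,1,\dots,m-1$ through $2i\mapsto i$; under this relabelling $\frac{a+b}{2}$ and $\frac{|a-b|}{2}$ become $i+j$ and $|i-j|$, so $\{i,j\}$ is an edge exactly when $i+j\in O_{rel}$ and $|i-j|\in O_{rel}$ (which already forces $i,j$ to have opposite parity). The only odd numbers that can occur as a sum or a difference are $\Omega=\{1,3,\dots,2m-3\}$, so $O_{rel}\subseteq\Omega$ and $|\Omega|=m-1$. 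Writing $\bar O=\Omega\setminus O_{rel}$, the target becomes the lower bound $|\bar O|\ge \frac{m}{4}-1$.

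Next I would turn disconnectedness into a covering condition. A disconnection provides a cut $(C,\bar C)$ with $C,\bar C\neq\emptyset$ and no edge across it; equivalently, for every opposite-parity pair $\{i,j\}$ with $i\in C$ and $j\in\bar C$, at least one of $i+j$ and $|i-j|$ lies in $\bar O$. Thus $\bar O$ must be a transversal of the family of two-element sets $\{\,\{i+j,\,|i-j|\}: \{i,j\}\text{ a cross pair}\,\}$, i.e. a vertex cover of the auxiliary graph $H$ on $\Omega$ whose edges are these pairs, and it suffices to bound $\tau(H)$ from below. It is convenient to split the vertices into the even-index side and the odd-index side and to record each potential edge by its pair $(i+j,\,|i-j|)$ of ``antidiagonal'' and ``diagonal'' odd numbers; deleting a single odd $x$ then destroys precisely the edges whose sum is $x$ together with those whose difference is $x$.

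The crux is a uniform lower bound $\tau(H)\ge \frac{m}{4}-1$ over all cuts, and I expect this to be the main obstacle, because the cheapest cuts and the obvious bounds come in two opposite flavours that must be matched to the single constant $\tfrac34$. For a balanced cut the number of cross pairs is about $m^2/8$, while---using the opposite-parity restriction---one deleted odd can kill at most about $m/4$ cross pairs through sums and $m/4$ through differences, i.e. at most $m/2$ in all; dividing gives $|\bar O|\ge m/4$, which is exactly the constant sought. For a very unbalanced cut (a near-isolated vertex) there are few cross pairs, but their sums and differences are almost all distinct, so the two-element sets are almost disjoint and a matching argument in $H$ forces $|\bar O|$ to be even larger, of order $m/2$. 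The delicate point is to show that every cut in between also costs at least about $m/4$ deleted odds---equivalently, that the balanced geometric cut is the true minimiser---so that one cannot do better by mixing a moderately sized side with favourably overlapping sum- and difference-lines; controlling the per-odd kill count for such intermediate cuts is where the real work lies.

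Finally I would read the bound backwards: $|\bar O|\ge \frac{m}{4}-1$ gives $|O_{rel}|=(m-1)-|\bar O|\le \frac{3m}{4}$, the contrapositive of the claim, the harmless additive slack between $m-1$ and $\frac{3m}{4}$ absorbing the rounding. Two degenerate situations should be checked separately rather than by averaging: when one side of the cut is tiny (handled by the disjointness/matching bound above), and when $1\notin O_{rel}$, so that no distance-one edges exist at all and the counting of short difference-lines must be done directly.
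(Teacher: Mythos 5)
Your setup is sound as far as it goes: passing to the contrapositive, identifying the universe $\Omega=\{1,3,\dots,2m-3\}$ of possible relevant odds, and observing that $\bar O=\Omega\setminus O_{rel}$ must hit the pair $\{i+j,\,|i-j|\}$ for every cross pair of the cut. But the proof has a genuine gap exactly where you flag it yourself: the uniform lower bound $\tau(H)\geq \frac{m}{4}-1$ over \emph{all} cuts is only asserted, with heuristics for the two extreme regimes (balanced cut, near-isolated vertex), and the intermediate regime --- which you call ``where the real work lies'' --- is never handled. This is not a removable technicality. In a global count over the whole cut, a single missing odd $x$ can destroy up to roughly $m$ cross pairs (every pair on the difference line $j=i+x$ can cross the cut, for suitable cuts), so the naive ratio of (number of cross pairs) to (maximum kills per odd) does not obviously stay above $m/4$ as the cut varies, and you give no argument that the balanced cut is the minimiser.

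The missing idea --- and the one the paper's proof rests on --- is to localize at a \emph{single vertex} instead of counting over the entire cut. Let $X$ be one component and $Y$ the union of the others; one of the two, say $Y$, has at least $\frac{m}{2}$ vertices. Fix a single vertex $a$ on the opposite side. For every $b\in Y$ the pair $ab$ is a non-edge, so at least one element of $S_b=\left\{\frac{a+b}{2},\frac{|a-b|}{2}\right\}$ lies outside $O_{rel}$. The crucial local count is that, with $a$ fixed, a missing odd $t$ can serve at most \emph{two} vertices $b$: indeed $t\in S_b$ forces $b\in\{2t-a,\ a+2t,\ a-2t\}$, and $2t-a$ and $a-2t$ cannot both be positive, so at most two of these values are admissible. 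Hence at least $\frac{1}{2}\cdot\frac{m}{2}=\frac{m}{4}$ distinct odds are missing, and $|O_{rel}|\leq m-\frac{m}{4}=\frac{3m}{4}$, completing the contrapositive with no case analysis over cuts at all. The troublesome intermediate regime in your plan never arises, because at a single vertex the per-odd kill count is $2$ rather than of order $m$; this is precisely the simplification your global vertex-cover formulation gives away.
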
 

\begin{proof} WLOG, we can remove the isolated vertices from $A$ and prove the statement for the resulting set $A$. Since the size of $A$ can only decrease, the lower bound of $\frac{7|A|}{8}$ on $O_{rel}$ still holds. 

\vspace{1em}\noindent
Assume to the contrary that $\mathcal{G}_A(O)$ is disconnected. Therefore, there exist at least two connected components. Let $X$ be a connected component and $Y$ be the union of the other connected components. 
Call the larger of these two sets as $Z$ and let $W$ be the other set.  
Then, we have $|Z|\geq \frac{|A|}{2}$. Now, $Z$ can be partitioned into two sets: those of the form $4k+2$ and those of the form $4k$. Let the larger set be $Z'$. We must have $|Z'|\geq \frac{|Z|}{2}\geq \frac{|A|}{4}$.   
Fix a vertex $a \in W$ such that $a$ has a form opposite to that of the numbers in $Z'$. Note that such a number exists in $W$ because if $W$ has all numbers of the same form, then $W$ cannot have edges within itself, and hence, would be a set of isolated vertices. But we removed all such vertices. 

\vspace{1em}\noindent
For $b \in Z'$, define $S_b = \left\{ \frac{(a+b)}{2}, \frac{|a - b|}{2} \right\}$. Note that all elements in $S_b$ are odd. Define \[T = \Set{t}{t \in S_b \text{ for some $b$ and } t \notin O_{rel}}\] We must have $O_{rel}\subseteq \{1,3,\ldots,2m-3\}\setminus T$, where $T\subseteq \{1,3,\ldots,2m-3\}$. 

\vspace{1em}\noindent
Observe that $a$ does not share edges with any vertex in $Z'$. Therefore, at least one element from each $S_b$ does not belong to $O_{rel}$.  Consider the multiset obtained by adding the elements from each $S_b$ that do not belong to $O_{rel}$. It has size at least $|Z'|\geq \frac{|A|}{4}$, and the multiplicity of any number in this multiset can be at most $2$. Hence, removing duplicates, we get the set $T$, which has size at least $\frac{|A|}{8}$. Therefore, $|O_{rel}|\leq |\{1,3,\ldots,2m-3\}| - |T| \leq m-1-\frac{|A|}{8}=|A|-1-\frac{|A|}{8} < \frac{7|A|}{8}$. But this is a contradiction to our premise. Hence, $\mathcal{G}_A(O)$ must be connected.   
\end{proof}

\noindent
Now we study odd-even graphs with odd sets of the following form: 
$$O_{a,b}=\Set{ak+b}{a\in\mathcal{E},b\in\mathcal{O},k\in\Nat}.$$

\begin{thm}\label{th uni1}
The oriented bipartite graph $\overrightarrow{G}=\overrightarrow{\mathcal{G}}_{\mathcal{E}}(O_{a,b})=(V,E)$ is unidirectional if and only if $4$ divides $a$. 
\end{thm}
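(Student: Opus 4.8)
The plan is to reformulate the arcs of $\overrightarrow{G}$ as pairs of elements of $O_{a,b}$ and then decide the direction of each arc, relative to the bipartition, from residues modulo $4$. For even vertices with $u<w$ there is an arc $\overrightarrow{uw}$ precisely when $p:=\frac{u+w}{2}$ and $q:=\frac{w-u}{2}$ both lie in $O_{a,b}$. Thus the arcs of $\overrightarrow{G}$ are exactly the pairs $u=p-q$, $w=p+q$ with $p,q\in O_{a,b}$ and $p\geq q$, each element of $O_{a,b}$ being odd so that $u,w$ are indeed even and a valid arc results (any such $u\geq 0$ is a vertex, since the vertex set is all of $\mathcal{E}$). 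Recall that the partite sets are $V_1=\Set{v}{\Mod{v}{0}{4}}$ and $V_2=\Set{v}{\Mod{v}{2}{4}}$.

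The key step is the direction computation modulo $4$. Since $p,q$ are odd, $u=p-q$ satisfies $\Mod{u}{0}{4}$ exactly when $\Mod{p}{q}{4}$ and $\Mod{u}{2}{4}$ exactly when $p\not\equiv q\pmod{4}$, while $w=p+q$ always falls in the opposite class. Hence an arc runs from $V_1$ to $V_2$ iff $\Mod{p}{q}{4}$, and from $V_2$ to $V_1$ iff $p\not\equiv q\pmod 4$. I would also note that taking $p=q=p_0$ for any $p_0\in O_{a,b}$ yields the arc from $0\in V_1$ to $2p_0\in V_2$, so $\overrightarrow{G}$ always carries a $V_1$-to-$V_2$ arc; consequently $\overrightarrow{G}$ is unidirectional if and only if it has no $V_2$-to-$V_1$ arc, that is, if and only if all elements of $O_{a,b}$ lie in a single residue class modulo $4$.

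It then remains to examine $O_{a,b}=\Set{ak+b}{k\in\Nat}$ modulo $4$, which is where the hypothesis on $a$ enters. If $4\mid a$, then $\Mod{ak+b}{b}{4}$ for every $k$, so every element of $O_{a,b}$ shares the residue $b\bmod 4$; thus $\Mod{p}{q}{4}$ for every admissible pair and $\overrightarrow{G}$ is unidirectional. If $4\nmid a$, then $a$ even forces $\Mod{a}{2}{4}$, so successive elements of $O_{a,b}$ differ by $2$ modulo $4$ and their residues alternate between the two odd residues $b$ and $b+2$ modulo $4$; choosing $p>q$ in $O_{a,b}$ with $p\not\equiv q\pmod 4$ produces a $V_2$-to-$V_1$ arc, which together with the arc from $0$ shows that $\overrightarrow{G}$ carries arcs in both directions and is not unidirectional.

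The only delicate point is the bookkeeping in the direction computation, namely keeping track of which endpoint lands in $V_1$ and which in $V_2$ under the parametrization $u=p-q$, $w=p+q$ subject to $p\geq q$; once this is settled the remaining work is elementary arithmetic modulo $4$, and no appeal to the structural characterizations proved earlier is required.
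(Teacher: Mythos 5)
Your proof is correct, and it is organized differently from the paper's, though both rest on the same elementary mod-$4$ arithmetic. The paper works directly with the progression $O_{a,b}$: for the forward direction it assumes an arc from $V_2$ to $V_1$, writes $\frac{u+v}{2}=an_1+b$ and $\frac{u-v}{2}=an_2+b$, and adds these to reach the parity contradiction $2x=a(n_1+n_2)/2+b$; for the converse it explicitly constructs one arc in each direction, from $u=a(n_1-n_2)$ to $v=a(n_1+n_2)+2b$ (with $n_1>n_2$ even) and from $v'=a(m_1-m_2)$ to $u'=a(m_1+m_2)+2b$ (with $m_1$ odd, $m_2$ even). You instead parametrize every arc by a pair $p\geq q$ of elements of the odd set and read off its direction from whether $\Mod{p}{q}{4}$, which yields a clean intermediate criterion: since the arc $0\to 2p_0$ always exists, $\overrightarrow{\mathcal{G}}_{\mathcal{E}}(O)$ is unidirectional if and only if $O$ lies in a single residue class modulo $4$. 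What this buys is generality: your criterion applies verbatim to an arbitrary odd set $O$, not just to $O_{a,b}$, and it settles the paper's unproved generalization (Theorem \ref{th uni2}) --- in fact it shows that the statement of Theorem \ref{th uni2} needs repair. For $O=\set{3,7}$ (so $a_i\in\set{2,6}$, expressible in the form $\set{a_i+1}$) every arc of $\overrightarrow{\mathcal{G}}_{\mathcal{E}}(O)$, namely $0\to 6$, $0\to 14$ and $4\to 10$, runs from $V_1$ to $V_2$, so the graph is unidirectional even though no $a_i$ is divisible by $4$; by your criterion the correct condition is that all elements of $O$ be congruent modulo $4$ (equivalently, that all the $a_i$ be congruent modulo $4$), which for the full progression $O_{a,b}$ does coincide with $4\mid a$.
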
 

\begin{proof}
Let $V_{1} = \{ v \in \mathcal{E} \mid v \equiv 0$ (mod $4)\}$ and $V_{2} = \{ v \in \mathcal{E} \mid v \equiv 2$ (mod $4)\}$. Then $V=V_1\cup V_2$. First assume that $a$ is divisible by $4$. Let $u = 4x \in V_1$, $v = 4y +2 \in V_2$ and $vu \in E$. So that forces $u > v$ as we have $\frac{u+v}{2}, \frac{u-v}{2}  \in O_{a,b}$. That is, we have $2(x+y) + 1,2(x-y) - 1  \in O_{a,b}$. This implies $$2x = a(n_1 +  n_2)/2 + b$$ where $n_1, n_2$ are some positive integers. But this is a contradiction as  $a(n_1 +  n_2)/2 + b$ is an odd number while $2x$ is even. So all the arcs in $\overrightarrow{G}$ are from $V_1$ to $V_2$, i.e., $\overrightarrow{G}$ is unidirectional.

\vspace{1em}
\noindent
For the converse part, assume that $a$ is not divisible by 4. Let $n_1 > n_2$ be two positive even integers. Then $u = a(n_1 - n_2) \in V_1$ and $v = a(n_1+n_2) +2b \in V_2$. In this case, $\frac{u+v}{2}, \frac{v-u}{2}  \in O_{a,b}$ and we have the arc $uv \in E$. On the other hand, consider two positive integers $m_1 > m_2$ where $m_1$ is odd and $m_2$ is even. Then $u' = a(m_1+m_2) +2b \in V_1$ and $v' = a(m_1 - m_2) \in V_2$. In this case, $\frac{u^\prime+v^\prime}{2}, \frac{u^\prime-v^\prime}{2}  \in O_{a,b}$ and we have the arc $v'u' \in E$. So the graph $\overrightarrow{G}$ is not unidirectional when $a$ is not divisible by 4. 
\end{proof} 

\begin{thm}\label{th uni2}
Let $\emptyset\neq I\subset\Nat$ and the odd set is given by $O = \Set{a_i + 1}{a_i\in\mathcal{E}, i \in I}$. Then the oriented graph $\overrightarrow{\mathcal{G}}_{\mathcal{E}}(O)$ is unidirectional if and only if $4$ divides $a_i$ for all $i \in I$ or, $4$ does not divide $a_i$ for all $i \in I$.
\end{thm} 

\begin{proof}\ Let $V_{1} = \{ v \in \mathcal{E} \mid v \equiv 0$ (mod $4)\}$, $V_{2} = \{ v \in \mathcal{E} \mid v \equiv 2$ (mod $4)\}$, $V=V_1\cup V_2$ and $\overrightarrow{\mathcal{G}}_{\mathcal{E}}(O)=(V,E)$. First suppose that $4$ divides $a_i$ for all $i \in I$. Let $u,v\in\mathcal{E}$ such that $uv\in E$. Then $\frac{u+v}{2},\frac{v-u}{2}\in O$. Thus $\frac{u+v}{2}=4k_1+1$ and $\frac{v-u}{2}=4k_2+1$ for some $k_1,k_2\in\Nat$. These imply $v=4(k_1+k_2)+2$ and $u=4(k_1-k_2)$. Thus all the arcs are from $V_1$ to $V_2$.

\vspace{1em}\noindent
Next consider that $4$ does not divide $a_i$ for all $i \in I$. Let $u,v\in\mathcal{E}$ such that $uv\in E$. Then as before we have $\frac{u+v}{2}=4k_1+3$ and $\frac{v-u}{2}=4k_2+3$ for some $k_1,k_2\in\Nat$. These imply $v=4(k_1+k_2+1)+2$ and $u=4(k_1-k_2)$. Thus again we have all the arcs are from $V_1$ to $V_2$.

\vspace{1em}\noindent
Finally let $a_1+1,a_2+1\in O$ such that $4$ divides $a_1$ and $4$ does not divide $a_2$. Let $a_1>a_2$. Consider $u=a_1-a_2$ and $v=a_1+a_2+2$. Then $u\in V_2$, $v\in V_1$, $\frac{u+v}{2}=a_1+1$ and $\frac{v-u}{2}=a_2+1$. Thus $uv\in E$. Again for $w=2a_1+2$, we have $0w\in E$, where $0\in V_1$ and $w\in V_2$. Thus the graph is not unidirectional. For $a_1<a_2$, the proof is similar with the choice $u=a_2-a_1$.
\end{proof}

\noindent
The adjacency matrix of the oriented graph $\overrightarrow{\mathcal{G}}_{\mathcal{E}}(O_{4,1})$ is of the form
$$\begin{array}{|c|c|}
\hline
\mathbf{0} & X\\
\hline
\mathbf{0} & \mathbf{0}\\
\hline
\end{array} \qquad \text{where}\qquad X\ =\ 
\begin{tiny}
\begin{bmatrix}
1 & \quad 0 & \quad 1 & \quad 0 & \quad 1 & \quad 0 & \quad ... \\

0 & \quad 1 & \quad 0 & \quad 1 & \quad 0 & \quad 1 & \quad ... \\

0 & \quad 0 & \quad 1 & \quad 0 & \quad 1 & \quad 0 & \quad ...\\

0 & \quad 0 & \quad 0 & \quad 1 & \quad 0 & \quad 1 & \quad ...\\

0 & \quad ...\\
\end{bmatrix}
\end{tiny}$$

\noindent
and the adjacency matrix of $\overrightarrow{\mathcal{G}}_{\mathcal{E}}(O_{6,1})$ is

\begin{tiny}
\[ \left[ \begin{array}{c|c}
 \mathbf{0} & \begin{bmatrix}
1 &  0 &  0 &  1 &  0 &  0 & 1 &  0 &  0 &  ... \\

0 &  0 &  0 &  0 &  0 &  0 & 0 &  0 &  0 &  ... \\

0 &  0 &  0 &  0 &  0 &  0 & 0 &  0 &  0 &  ... \\

0 &  0 &  0 &  0 &  1 &  0 &  0 &  1 & 0 &  ... \\

0 & \quad ...\\

\end{bmatrix} \\

 \hline
 
 \begin{bmatrix}
0 &  0 &  0 &  0 &  0 &  0 & 0 &  0 &  0 &  ... \\

0 &  1 &  0 &  0 &  1 &  0 & 0 &  1 &  0 &  ... \\

0 &  0 &  0 &  0 &  0 &  0 & 0 &  0 &  0 &  ... \\

0 &  0 &  0 &  0 &  0 &  0 & 0 &  0 &  0 &  ... \\

0 &  0 &  0 &  0 &  1 &  0 & 0 &  1 &  0 &  ... \\

0 & \quad ...\\

\end{bmatrix} & \mathbf{0} \end{array} \right].\]
\end{tiny}

\noindent
Note that according to Theorem~\ref{th uni1}, $\overrightarrow{\mathcal{G}}_{\mathcal{E}}(O_{4,1})$ is unidirectional while $\overrightarrow{\mathcal{G}}_{\mathcal{E}}(O_{6,1})$ is not. From  the above two examples one can observe the difference between the 
adjacency matrices of unidirectional and not unidirectional oriented odd-even graphs. 

\section{The Goldbach Graph}\label{sec gg}

Here, we focus on a particular odd-even graph $\overrightarrow{\mathcal{G}}_{\mathcal{E}}(\mathcal{P})$ and $\mathcal{G}_{\mathcal{E}}(\mathcal{P})$ where the odd set $\mathcal{P}$ is the set of all odd primes, and call them the {\em Goldbach (infinite) digraph} and the {\em Goldbach (infinite) graph} respectively for the reason that will become apparent in the first result of this section. The set of vertices of the Goldbach (infinite) graph is the set of all non-negative even integers and two such vertices $a,b$ are adjacent if and only if both $\frac{a+b}{2}$ and $\frac{|a-b|}{2}$ are odd prime numbers. Let $\mathcal{E}_{n}$ denote the set of all non-negative even numbers less than or equal to $2n$. Also, the graph $\mathcal{G}_{\mathcal{E}_n}(\mathcal{P})$ will be denoted by $\mathcal{G}_{n}$ and we call this graph a {\em Goldbach (finite) graph}. The neighborhood $N_{\mathcal{G}_{n}}(v)$ (or, the out-neighbor $N^+_{\mathcal{G}_{n}}(v)$ or the in-neighbor $N^-_{\mathcal{G}_{n}}(v)$) of a vertex $v$ in $\mathcal{G}_{n}$ (or, in $\overrightarrow{\mathcal{G}}_{n}$ which we call a {\em Goldbach (finite) digraph}) will be denoted by $N_n(v)$ (or $N_n^+(v)$ or $N_n^-(v)$, respectively) for the remainder of the section. Also the degree $d_{\mathcal{G}_{n}}(v)$ (or, the out-degree $d^+_{\mathcal{G}_{n}}(v)$ or the in-degree $d^-_{\mathcal{G}_{n}}(v)$) of a vertex $v$ in $\mathcal{G}_{n}$ (or, in $\overrightarrow{\mathcal{G}}_{n}$) will be denoted by $d_n(v)$ (or $d_n^+(v)$ or $d_n^-(v)$, respectively) for the remainder of the section. We denote $\overrightarrow{\mathcal{G}}_{\mathcal{E}}(\mathcal{P})$ and $\mathcal{G}_{\mathcal{E}}(\mathcal{P})$ by $\overrightarrow{\mathcal{G}}_{\infty}$ and $\mathcal{G}_{\infty}$ respectively and the out-degree and the in-degree of $v\in\mathcal{E}$ in $\overrightarrow{\mathcal{G}}_{\infty}$ by $d_{\infty}^+(v)$ and $d_{\infty}^-(v)$ respectively and the degree of $v\in\mathcal{E}$ in $\mathcal{G}_{\infty}$ by $d_\infty(v)$. $N_\infty^+(v)$ or $N_\infty^-(v)$ are defined similarly. Now we state the result that, by and large,  motivated this work.

\begin{thm}\label{th goldbach} The following statements are equivalent. 
\begin{enumerate}
\item[(i)] (Goldbach's conjecture) Every even integer greater than 5 can be written as sum of two odd primes.
\item[(ii)] $\mathcal{G}_n$ is connected for all $ n \geq 7$.
\item[(iii)] $d_{\infty}^-(v) > 0$ for all $v \geq 6$ in $\overrightarrow{\mathcal{G}}_{\infty}$.  
\end{enumerate}
\end{thm}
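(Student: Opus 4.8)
The plan is to prove the three-way equivalence through the cycle $(i)\Leftrightarrow(iii)$, $(iii)\Rightarrow(ii)$, $(ii)\Rightarrow(i)$, establishing the first equivalence directly from the definition of in-degree and reserving the real work for the two implications involving the finite graphs $\mathcal{G}_n$.

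First I would unwind (iii). An arc $\overrightarrow{av}$ of $\overrightarrow{\mathcal{G}}_{\infty}$ exists exactly when $p:=\frac{a+v}{2}$ and $q:=\frac{v-a}{2}$ are both odd primes, and then $v=p+q$. Conversely, writing an even $v$ as $p+q$ with odd primes $p\ge q$ and setting $a=p-q\ge 0$ produces such an arc. Hence $d_{\infty}^-(v)>0$ if and only if $v$ is a sum of two odd primes, so (iii) says literally that every even $v\ge 6$ is a sum of two odd primes, which is (i). This step is routine bookkeeping.

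For $(iii)\Rightarrow(ii)$, fix $n\ge 7$ and work in $\mathcal{G}_n$ on the vertices $\{0,2,\dots,2n\}$. The key is a descent step: by (iii) any vertex $v$ with $6\le v\le 2n$ can be written $v=p+q$ with odd primes $p\ge q$, and then $v':=p-q$ satisfies $0\le v'<v$ and $v\sim v'$ (since $\frac{v+v'}{2}=p$ and $\frac{v-v'}{2}=q$), with the edge lying inside $\mathcal{G}_n$ because $v'<v\le 2n$. Iterating this strictly decreasing descent, every vertex is joined by a path to one of the three ``small'' vertices $0,2,4$. It then remains to connect $0,2,4$ to one another, which I would do with explicit short paths, e.g.\ $0\sim 10\sim 4$ (via the prime pairs $5,5$ and $7,3$) and $0\sim 14\sim 8\sim 2$ (via $7,7$; then $11,3$; then $5,3$); the second path requires $14\le 2n$, which is exactly why the threshold is $n\ge 7$. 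Once $0,2,4$ lie in a common component, $\mathcal{G}_n$ is connected.

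For $(ii)\Rightarrow(i)$ I would argue by contraposition. If Goldbach fails, choose an even $v\ge 6$ that is not a sum of two odd primes; since $6,8,10,12$ are all such sums, necessarily $v\ge 14$, so $n:=v/2\ge 7$. In $\mathcal{G}_n$ the vertex $v=2n$ has no neighbor below it (such a neighbor would exhibit $v$ as a sum of two odd primes) and no neighbor above it (any out-neighbor $b$ satisfies $b>v=2n$, hence $b\notin\mathcal{E}_n$); thus $v$ is isolated in a graph with $n+1\ge 8$ vertices, so $\mathcal{G}_n$ is disconnected, contradicting (ii). I expect the finite direction $(iii)\Rightarrow(ii)$ to be the main obstacle: the descent handles all large vertices uniformly, but the base vertices $0,2,4$ are pairwise non-adjacent and must be stitched together by hand, and it is precisely this stitching (needing the edge to $14$) that pins down the hypothesis $n\ge 7$ rather than a smaller bound.
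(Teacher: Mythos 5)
Your proposal is correct and rests on the same two ideas as the paper's own proof: a Goldbach decomposition $v=p+q$ gives the vertex $v$ an edge to the strictly smaller vertex $p-q$ (the paper uses this as the inductive step for connectivity of $\mathcal{G}_{k+1}$ from $\mathcal{G}_k$, you use it as an iterated descent to $\{0,2,4\}$), and conversely the maximal vertex $2n$ of $\mathcal{G}_n$ can only have smaller neighbours, so connectivity forces a Goldbach decomposition. The remaining differences --- your cycle order $(i)\Leftrightarrow(iii)$, $(iii)\Rightarrow(ii)$, $(ii)\Rightarrow(i)$ versus the paper's $(i)\Rightarrow(ii)\Rightarrow(iii)\Rightarrow(i)$, and your explicit stitching paths $0\sim 10\sim 4$ and $0\sim 14\sim 8\sim 2$ in place of the paper's hand-checked base case that $\mathcal{G}_7$ is connected --- are organizational rather than substantive.
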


\begin{proof}

\noindent
$\mathbf{(i)\Rightarrow(ii)}$:  Suppose that the Goldbach's conjecture is true. Observe that  $\mathcal{G}_7$ is connected. Now assume that
$\mathcal{G}_n$ is connected for all $n \leq k$. By Goldbach Conjecture, $2(k + 1) = p + q$ for some $p, q \in \mathcal{P}$. Then $|p - q|$ is even and $|p - q| < p + q = 2(k + 1)$. Thus $2(k + 1)$ is adjacent to $|p - q|$ which is a vertex of $\mathcal{G}_k$ as well. This implies that $\mathcal{G}_{k+1}$ is connected. 
 
\vspace{1em}
\noindent
$\mathbf{(ii)\Rightarrow(iii)}$:  Suppose $\mathcal{G}_n$ is connected for all $n \geq 7$. Let $v$ be any even integer greater equal to 14. Then, as the graph $\mathcal{G}_{v/2}$ is connected, the vertex $v$ of the graph must be adjacent to some other vertex of the graph. Note that $v$ is the greatest vertex in $\mathcal{G}_{v/2}$. Hence  $d_{\infty}^-(v) > 0$. Now it is a simple observation that for $6 \leq v \leq 12$ we have $d_{\infty}^-(v) > 0$ as $0\in N_\infty^-(6)\cap N_\infty^-(10)$ and $2\in N_\infty^-(8)\cap N_\infty^-(12)$. This completes the proof. 

\vspace{1em}
\noindent
$\mathbf{(iii)\Rightarrow(i)}$: Suppose  $d_{\infty}^-(v) > 0$ for all $v \geq 6$. Now for any even number $a > 5$ there exists $b$ such that $b \in N^-_{\infty}(a)$. That means, there exist odd primes $p,q$ such that we have $p+q = a$. This is precisely the Goldbach's conjecture.
\end{proof}

\noindent
The above result shows that the Goldbach's conjecture can be formulated using graph theoretic notions. Note that in Theorem~\ref{th con1} and~\ref{th con2} we presented one necessary and another sufficient conditions for connectedness of finite odd-even graphs. Improved results of similar nature might give rise to an alternative way of digging into the Goldbach's conjecture using graph theory due to Theorem~\ref{th goldbach}. Having proved this equivalence, naturally we tried to explore more such equivalent formulations. Our observation which was integral in proving the above result is  that, given a non-negative even integer $2n$, it is adjacent to a smaller integer implies that $2n$ can be expressed as the sum of two odd primes. Similarly, its adjacency with a greater integer implies that $2n$ can be expressed as difference of two odd primes.   
This readily provides graph theoretic formulation of another well-known conjecture in number theory. 

\begin{thm} The following statements are equivalent. 

\begin{itemize}
\item[$(i)$] (A  conjecture by Maillet~\cite{mal}) Every non-negative even integer can be written as difference of two odd primes.

\item[$(ii)$] $d_{\infty}^+(v) > 0$ for all $v \geq 2$ in $\overrightarrow{\mathcal{G}_{\infty}}$. 
\end{itemize}
\end{thm}

\noindent
After this the first thing that came to our notice is that the degree of the vertices of our graph is particularly interesting. As the graph is an infinite graph, the natural question about the degrees are, if they are finite or not. In particular, note that each vertex have finite in-degree, as its in-neighbors are smaller non-negative even numbers, while its out-degree can be unbounded. So the vertex $0$ has no in-neighbors while its out-neighbors are precisely $2p$ for all $p \in \mathcal{P}$. 
We know that 
there are infinitely many odd primes due to Euclid's theorem  (which says, there are infinitely many prime numbers). 
Hence, $d^+_{\infty}(0)$ is infinite and this is equivalent to Euclid's theorem. 

\begin{obs}
The vertex $0$ of $\overrightarrow{\mathcal{G}}_{\infty}$ has infinitely many out-neighbors and hence, has infinitely many neighbors.
\end{obs}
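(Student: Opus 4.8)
The plan is to establish the Observation directly from the definition of the oriented odd-even graph $\overrightarrow{\mathcal{G}}_{\infty}=\overrightarrow{\mathcal{G}}_{\mathcal{E}}(\mathcal{P})$, where $\mathcal{P}$ is the set of all odd primes. First I would recall the arc rule: for $a,b\in\mathcal{E}$ with $a<b$, there is an arc $\overrightarrow{ab}$ precisely when both $\frac{a+b}{2}$ and $\frac{b-a}{2}$ lie in $\mathcal{P}$. I would then specialize to the vertex $0$, so that for any even number $b>0$ the two relevant odd numbers collapse to the single value $\frac{0+b}{2}=\frac{b-0}{2}=\frac{b}{2}$. Hence the out-neighbors of $0$ are exactly those even $b$ for which $\frac{b}{2}$ is an odd prime; equivalently, $b=2p$ for some $p\in\mathcal{P}$.

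With this characterization in hand, the content of the Observation reduces to a cardinality statement about $\mathcal{P}$. The key step is to invoke Euclid's theorem, which guarantees infinitely many primes, and note that removing the single even prime $2$ leaves infinitely many odd primes, so $\mathcal{P}$ is infinite. Since the map $p\mapsto 2p$ is injective from $\mathcal{P}$ into $\mathcal{E}$, the set $N^+_{\infty}(0)=\Set{2p}{p\in\mathcal{P}}$ is infinite. Therefore $0$ has infinitely many out-neighbors.

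Finally I would deduce the neighborhood claim: in the underlying graph, every out-neighbor is in particular a neighbor, so $N_{\infty}(0)\supseteq N^+_{\infty}(0)$, and the latter being infinite forces $N_{\infty}(0)$ to be infinite as well. Since the vertex $0$ has no in-neighbors (any in-neighbor would have to be a smaller even number, and none exists), the neighbors of $0$ are exactly its out-neighbors, which completes the argument.

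I do not anticipate a genuine obstacle here, as the statement is essentially a direct translation of Euclid's theorem through the arc definition; the only point requiring minor care is confirming that the single even prime $2$ is excluded, so that we are counting odd primes rather than all primes, and verifying that the symmetric collapse $\frac{a+b}{2}=\frac{b-a}{2}$ at $a=0$ leaves exactly one membership condition to check. These are routine and can be stated in a line or two.
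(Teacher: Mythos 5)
Your proof is correct and follows essentially the same route as the paper: the text preceding the Observation identifies the out-neighbors of $0$ as precisely the numbers $2p$ for $p\in\mathcal{P}$ and invokes Euclid's theorem to conclude that $d^+_{\infty}(0)$ is infinite. Your additional remarks (the collapse of the two membership conditions at $a=0$, injectivity of $p\mapsto 2p$, and the absence of in-neighbors of $0$) only make explicit details the paper leaves implicit.
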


\noindent
This observation naturally motivates us to wonder if the degree (or out-degree) of the other vertices are finite or not. It turns out to be a difficult question as it is equivalent to another well-known conjecture, the Kronecker's conjecture. 

\begin{thm} The following statements are equivalent. 

\begin{enumerate}
\item[$(i)$] (Kronecker's conjecture~\cite{KR}) Given a positive even number $2k$, there are infinitely many pairs of primes of the form $\{p,p+2k\}$.
\item[$(ii)$] For every vertex $v \in \mathcal{E}$ we have $d_\infty^+(v)$ is infinite in $\overrightarrow{\mathcal{G}}_{\infty}$.
\item[$(iii)$] For every vertex $v \in \mathcal{E}$ we have $d_\infty(v)$ is infinite in $\mathcal{G}_{\infty}$.
\end{enumerate}
\end{thm}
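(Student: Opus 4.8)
The plan is to prove the two biconditionals $(i)\iff(ii)$ and $(ii)\iff(iii)$ separately. The first is essentially a matter of unwinding the definition of an out-neighbour in $\overrightarrow{\mathcal{G}}_{\infty}$, so that condition $(ii)$ becomes Kronecker's statement verbatim; the second follows from the fact, already noted before the statement, that every vertex of $\overrightarrow{\mathcal{G}}_{\infty}$ has \emph{finite} in-degree.

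For $(i)\iff(ii)$, I would fix an arbitrary vertex $v=2k\in\mathcal{E}$ and describe its out-neighbours explicitly. By definition an arc $\overrightarrow{vb}$ exists precisely when $v<b$ and both $\tfrac{v+b}{2}$ and $\tfrac{b-v}{2}$ lie in $\mathcal{P}$. Writing $b=v+2q$ with $q=\tfrac{b-v}{2}\in\Nat$, the two quantities become $q$ and $v+q=2k+q$, so the arc condition is exactly that $q$ and $2k+q$ are both odd primes. Hence $b\mapsto q$ is a bijection from $N^+_{\infty}(v)$ onto $\Set{q\in\mathcal{P}}{2k+q\in\mathcal{P}}$, and therefore $d_\infty^+(2k)$ is infinite if and only if there are infinitely many odd primes $q$ with $2k+q$ prime. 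Since $2k+q$ is odd, it is prime exactly when it is an odd prime, and restricting from all primes $q$ to odd primes $q$ discards at most the single value $q=2$; thus the infinitude of such $q$ is precisely Kronecker's conjecture for the even number $2k$. Ranging over all $v$ then yields $(ii)\iff(i)$, with the degenerate case $v=0$ reducing on both sides, unconditionally, to the infinitude of the odd primes (Euclid's theorem), so it causes no trouble.

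For $(ii)\iff(iii)$, I would use that $\mathcal{G}_{\infty}$ is the simple underlying graph of $\overrightarrow{\mathcal{G}}_{\infty}$, so the neighbour set of $v$ is the disjoint union $N^+_{\infty}(v)\sqcup N^-_{\infty}(v)$, disjoint because an oriented graph never contains both $\overrightarrow{vu}$ and $\overrightarrow{uv}$. This gives $d_\infty(v)=d_\infty^+(v)+d_\infty^-(v)$. The in-neighbours of $v$ are even numbers strictly smaller than $v$, of which there are only finitely many, so $d_\infty^-(v)<\infty$. Consequently $d_\infty(v)$ is infinite if and only if $d_\infty^+(v)$ is infinite, which is exactly $(ii)\iff(iii)$.

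I do not expect a genuine obstacle: the result is a translation, and the ``work'' is entirely bookkeeping built on the definition of the Goldbach graph. The only points that need care are the parity argument that identifies ``infinitely many odd primes $q$ with $2k+q$ prime'' with the standard form of Kronecker's conjecture, and the verification that $b\mapsto q$ is a genuine bijection with no double counting; both are routine once the out-neighbour condition is written in the form $q,\,2k+q\in\mathcal{P}$.
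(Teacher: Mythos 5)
Your proposal is correct and follows essentially the same route as the paper: the correspondence between out-neighbours $b=2(p+k)$ of the vertex $2k$ and prime pairs $\{p,p+2k\}$ (which the paper splits into the implications $(i)\Rightarrow(ii)$ and $(iii)\Rightarrow(i)$, and you package as a single bijection), together with the observation that $d_\infty(v)=d_\infty^+(v)+d_\infty^-(v)$ with $d_\infty^-(v)$ finite since in-neighbours are smaller even numbers. Your treatment is in fact slightly more careful than the paper's, since you explicitly check the degenerate vertex $v=0$ and the parity bookkeeping around $q=2$, which the paper glosses over.
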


\begin{proof}

\noindent
$\mathbf{(i)\Rightarrow(ii)}$: Suppose that the conjecture is true. Let $2k$ be an even number for some $k \geq 1$. So, there are infinitely many pairs of primes of the form $\{p,p+2k\}$ by assumption. Note that for each such pair of primes the vertex $2k$ is adjacent to the vertex $2(p+k)$ in $\mathcal{G}_\infty$. 

\vspace{1em}
\noindent 
$\mathbf{(ii) \Leftrightarrow (iii)}$:  Clearly follows from the fact that $ d_\infty^+(v) \leq d_\infty^+(v) + d_\infty^-(v) = d_\infty(v)$ for all $v \in \mathcal{E}$ while $d_\infty^-(v)$ is finite. 

\vspace{1em}
\noindent 
$\mathbf{(iii)\Rightarrow(i)}$: Suppose $d_{\infty}^+(v)$ is infinite  for all $v \in \mathcal{E}$. Let $v= 2k$ be an even number for some $k \geq 1$. Now for each out-neighbor $u=2n$ of $v$ in  $\mathcal{G}_\infty$ we have $\frac{2n+2k}{2}, \frac{(2n-2k)}{2} \in \mathcal{P}$. Hence, both $(n-k)$ and $(n+k)$ are primes and there are infinitely such distincts pairs for each $k \geq 1$.  
\end{proof}

\noindent
In particular, determining if degree (or out-degree) of $2$ is finite or not will settle the twin prime conjecture~\cite{zhang} (positively if $d(2)$ is infinite). This implies an immediate corollary.

\begin{cor} The following statements are equivalent. 
\begin{enumerate}
\item (Twin prime conjecture~\cite{zhang}) There are infinitely many pairs of primes of the form $\{p,p+2\}$.
\item $d_\infty^+(2)$ is infinite in $\overrightarrow{\mathcal{G}}_{\infty}$.
\item $d_\infty(2)$ is infinite in $\mathcal{G}_{\infty}$.
\end{enumerate}
\end{cor}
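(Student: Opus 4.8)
The plan is to derive this corollary directly from the Kronecker equivalence just established, specialized to the case $2k = 2$, together with a short computation showing that the vertex $2$ has no in-neighbors. I would split the work into two independent claims: first the numerical identity $d_\infty(2) = d_\infty^+(2)$, and then the equivalence of its infinitude with the twin prime conjecture.

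For the identity, I would compute $d_\infty^-(2)$ explicitly. An in-neighbor of $2$ is an even vertex $w$ with $\overrightarrow{w2}$ an arc, which forces $w < 2$ and hence $w = 0$, the only non-negative even number below $2$. The arc condition then requires $\frac{2+0}{2} = 1$ and $\frac{2-0}{2} = 1$ to both lie in $\mathcal{P}$; since $1$ is not an odd prime, no such arc exists and $d_\infty^-(2) = 0$. Therefore $d_\infty(2) = d_\infty^+(2) + d_\infty^-(2) = d_\infty^+(2)$, which establishes the claimed equality unconditionally, independent of the truth of either conjecture.

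For the equivalence, I would make explicit the bijection between out-neighbors of $2$ and twin prime pairs that underlies the $(iii)\Rightarrow(i)$ direction of the Kronecker theorem with $k = 1$. An out-neighbor of $2$ is a vertex $2n$ with $\frac{2n+2}{2} = n+1$ and $\frac{2n-2}{2} = n-1$ both in $\mathcal{P}$; setting $p = n-1$, this says precisely that $\{p, p+2\}$ is a pair of odd primes. Conversely, each such twin pair yields the out-neighbor $2(p+1)$, and distinct pairs yield distinct vertices. Hence $d_\infty^+(2)$ counts exactly the twin prime pairs, so it is infinite if and only if there are infinitely many of them.

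No step here is genuinely difficult: the substantive content is carried entirely by the previously proved Kronecker equivalence, and what remains is bookkeeping. The only point requiring a little care is the in-degree computation, since one must remember to include $0 \in \mathcal{E}$ as the sole candidate in-neighbor and then verify that it fails because $1 \notin \mathcal{P}$; overlooking this would leave the equality $d_\infty(2) = d_\infty^+(2)$ asserted in the statement unjustified.
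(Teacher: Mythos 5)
Your proposal is correct and follows the same route the paper intends: the paper offers no separate proof, presenting the corollary as the $k=1$ specialization of the Kronecker equivalence just proved, which is exactly the bijection between out-neighbors of $2$ and twin prime pairs that you spell out. Your unconditional computation that $d_\infty^-(2)=0$ (the sole candidate in-neighbor $0$ fails because $1\notin\mathcal{P}$) is a detail the paper leaves implicit, and it is indeed what justifies the equality $d_\infty(2)=d_\infty^+(2)$ asserted in the statement.
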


\noindent
Next we will try to understand the significance of  the degrees of the vertices in $\mathcal{G}_{\infty}$. 
Given a non-negative even number $2n$, the in-degree $d_\infty^-(2n)$  is the number of ways $2n$ can be expressed as the sum of two odd 
primes. Similarly, the out-degree $d_\infty^+(2n)$ is the number of ways $2n$ can be expressed as the difference of two odd 
primes. Moreover, the degree of $0$ in $\mathcal{G}_{n}$ is the number of odd primes less than or equal to $n$. 
So, the graph parameter $d_n(0)$ can be regarded as a function similar to the prime counting function $\pi(n)$, which 
denotes the number of primes less than or equal to $n$. So, for $n \geq 2$ we have $$\pi(n) = d_n(0) +1$$ as the only even prime $2$ is not adjacent to $0$. As it turned out to be an  interesting yet difficult problem to figure out what the degrees of the vertices are, we started to
establish some relations between them. Hence the following result.   
 
\begin{thm}
For all $n \geq 2r$ and for $0 \leq m \leq 4$, in $\overrightarrow{\mathcal{G}}_{\infty}$ we have 
$$\sum_{i=0}^{m} d_n^+(2i) \geq \sum_{i=0}^{m} d_n^-(2r-2i).$$ 
\end{thm}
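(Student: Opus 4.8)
The plan is to translate both sides into counts of pairs of odd primes and then build an injection. First I would record the combinatorial meaning of the degrees. An out-arc of $2i$ with head $2j\le 2n$ corresponds to the pair of odd primes $(i+j,\,j-i)$, so $d_n^+(2i)$ counts pairs $(p,q)$ of odd primes with $p\ge q$, $p-q=2i$ and $p+q\le 2n$. Dually, an in-arc of $2\ell$ corresponds to $(\ell+k,\,\ell-k)$, so $d_n^-(2\ell)$ counts pairs $(p,q)$ of odd primes with $p\ge q$ and $p+q=2\ell$ (independent of $n$ once $\ell\le n$). Summing over $0\le i\le m$, the left-hand side equals $|U|$ and the right-hand side equals $|I|$, where
\[
U=\Set{(p,q)}{p\ge q\text{ odd primes},\ 0\le p-q\le 2m,\ p+q\le 2n},
\]
\[
I=\Set{(p,q)}{p\ge q\text{ odd primes},\ 2(r-m)\le p+q\le 2r}.
\]
Since $n\ge 2r$, every pair in $I$ already satisfies $p+q\le 2r\le 2n$, but its difference may be large; conversely pairs in $U$ have small difference and essentially unrestricted sum. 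Thus the theorem is exactly the statement $|I|\le|U|$.

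Next I would inject $I$ into the ``diagonal'' pairs of $U$. For any odd prime $t$ appearing in an in-pair one has $t\le p\le (p+q)-q\le 2r-3$, and then the diagonal pair $(t,t)$ lies in $U$, since its difference is $0\le 2m$ and its sum is $2t\le 2(2r-3)<4r\le 2n$. Hence it is enough to assign to each in-pair $(p,q)$ one of its two primes $\phi(p,q)\in\{p,q\}$ so that $\phi$ is injective; then $(p,q)\mapsto(\phi(p,q),\phi(p,q))$ injects $I$ into $U$, giving $|I|\le|U|$.

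Such a system of distinct representatives exists precisely when the graph $H$ — with the odd primes as vertices and the pairs of $I$ as edges (a pair with $p=q$ becoming a loop) — has no subgraph with more edges than vertices; equivalently, by Hall's theorem, every set $T$ of odd primes is the vertex set of at most $|T|$ pairs of $I$, i.e.\ $H$ is a pseudoforest. This reduction is the heart of the matter, and it is here that $m\le 4$ enters: an edge $(p,q)$ of $H$ forces $p+q$ into the window $[2(r-m),2r]$ of width $2m\le 8$, so two edges meeting at a common prime have their other endpoints within $8$ of each other, and an entire component of $H$ must consist of odd primes packed into a short interval whose relevant pairwise sums all lie in a window of width at most $8$.

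The main obstacle is to convert this packing constraint into the pseudoforest bound, that is, to rule out a component of $H$ carrying two independent cycles. I would argue that two independent cycles force either four odd primes lying in an interval of length $\le 8$ with a coincidence among their pairwise sums, or one prime simultaneously adjacent to two tightly clustered prime pairs; in each configuration one is left with four or more odd primes confined to an interval of length $\le 8$ together with an additive equality among them. The spacing of primes modulo $3$ (among three consecutive odd numbers one is always a multiple of $3$) together with the width-$8$ sum window then leaves no room for the extra edge, so each component satisfies $\#\text{edges}\le\#\text{vertices}$; the finitely many genuinely small values of $r$, where the scarcity of primes has not yet set in, would be verified by direct inspection. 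Making this clustering lemma precise — and confirming that $m\le 4$ is exactly what makes it hold — is the crux of the argument.
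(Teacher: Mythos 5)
Your translation of the two sides into the sets $U$ and $I$ of prime pairs is correct, but the proof collapses at the point where you decide the injection will land only on diagonal pairs $(t,t)$. Diagonal pairs of $U$ are precisely the out-arcs of the vertex $0$, so your plan amounts to proving the stronger inequality $\sum_{i=0}^{m} d_n^-(2r-2i) \le d_n^+(0)$, and that stronger inequality is false. Take $2r = n = 100$ and $m = 4$: there are $24$ odd primes $\le 100$, so $d_n^+(0) = 24$, while the Goldbach partitions give $d_n^-(100)+d_n^-(98)+d_n^-(96)+d_n^-(94)+d_n^-(92) = 6+3+7+5+4 = 25$. Hence no injective choice of representatives can exist; equivalently, your graph $H$ in this case has the $24$ odd primes from $3$ to $97$ as vertices and the $25$ pairs as edges (including a loop at $47$ from $47+47=94$), is connected, and therefore has more edges than vertices --- it is not a pseudoforest, and Hall's condition fails outright. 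The clustering lemma you hoped to prove is also false as stated: a component of $H$ is not confined to a short interval, because the neighbours of a prime $p$ lie near $2r-p$, not near $p$; in the example above a single component contains every odd prime from $3$ to $97$.

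The paper's proof shows exactly what is missing: the excess over $d_n^+(0)$ must be absorbed by the non-diagonal part of $U$, i.e.\ by the out-neighbourhoods of $2,4,6,8$. In the paper's notation, each prime $q\in\bigcup_i A_i$ contributes the diagonal element $(0,2q)$ only once; whenever $q \in A_i \cap A_j$ with $i<j$, the two partner primes $p_1 = 2r-2i-q$ and $p_2 = 2r-2j-q$ differ by $2(j-i)\le 8$, which produces the element $2p_1-2(j-i)$ of $N_n^+(2(j-i))$ --- a prime pair at distance $2$, $4$, $6$ or $8$ supplies the extra representative. This is also the true role of the hypothesis $m\le 4$: it limits the overlaps to differences $2(j-i)\in\{2,4,6,8\}$, which the paper then disambiguates by a finite case analysis over the pairs $(i,j)$; it is not a width-$8$ packing constraint on prime clusters. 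To salvage your set-up you would have to let the injection take values in all of $U$, routing each repeated membership $q\in A_i\cap A_j$ to the appropriate $d_n^+(2(j-i))$ and checking these images do not collide --- which is essentially the paper's argument.
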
 

\noindent \textit{Sketch of the proof.}
Let $A_i = \left\{ q \mid p + q = 2r - 2i \text{ and } q \leq p \right\}$ for $i \in \{0,1,2,3,4\}$ where $p,q$ are odd primes.
Observe that $d_n^-(2r-2i) = |A_i|$.
Note that for any $q \in \bigcup_{i=0}^4 A_i$, we have 
$2q \in N_n^+(0).$
Thus, $d_n^+(0) \geq \mid \bigcup_{i=0}^4 A_i \mid$. 

\noindent
Now suppose $q \in A_i \cap A_j$ for some $i,j \in \{0,1,2,3,4\}$ and $i <j$. 
Then there are primes $p_1,p_2 \geq q$ such that $p_1 +q = 2r-2i$ and $p_2 +q = 2r-2j$. So $p_2 = p_1 - 2(j-i)$.
As both $\frac{2p_1-2(j-i)+2(j-i)}{2} = p_1$ and $\frac{2p_1-2(j-i)-2(j-i)}{2} = p_1-2(j-i)=p_2$ are odd primes
we have  

\begin{equation}\label{eqn2}
2p_1 - 2(j-i)) \in N_n^+(2(j-i)).
\end{equation}
 
\noindent
Let $S_i = \{(i,x) \mid x \in N_n^+(2i)\}$ for $i \in \{0,1,2,3,4\}$. 
Note that $S_i \cap S_j = \emptyset$ for $i \neq j$ and $|S_i| = d_n^+(2i)$ for all $i \in \{0,1,2,3,4\}$. 
Also let $S  = \bigcup_{i=0}^4 S_i$. 
Then we will construct a subset $T \subseteq S$ such that $|T| \geq \sum_{i=0}^4 \mid A_{i} \mid$. This will complete the proof.

\noindent
\textbf{Step 0:} We know that for each $q \in \bigcup_{i=0}^4 A_i$, we have $(0,2q) \in S_0$.
Put all these $(0,2q)$'s in the set $T$. 
Next we have to deal with the elements that are in more than one $A_i$'s. 

\noindent
\textbf{Step 1:} First we handle the case where an element $q \in A_0 \cap A_j$ 
for some $j \in \{1,2,3,4\}$. For each such $q$ there is a prime $p$ such that $p+q = 2r$. By (\ref{eqn2}) we know that
that for each such $q$, there is an edge between $2j$ and $(2p - 2j)$.  Put all these $(j,2p-2j)$'s in $T$. Observe that all these are new elements in $T$ as $j \geq 1$. 

\noindent
\textbf{Step 2:} Now consider an element $q \in A_1 \cap A_2$. Then there exists a prime  $p$ such that $p+q = 2r - 2$ and by (\ref{eqn2}) we know that 
$(1,2p-2) \in S_1$. 
We will put all such $(1,2p-2)$'s in $T$ if they were already not in $T$. 
Let for some $q$, its corresponding $(1,2p-2)$ were already in $T$. That means that element was included to $T$ due to Step~1.
Therefore, $p +(q+2) = 2r$ where $(q+2)$ is also a prime. Hence, $(1,2q +2) \in S_1$ as both $\frac{2q+2+2}{2}=(q+2)$ and $\frac{2q+2-2}{2}=q$     are primes. 
Note that, all the elements included to $T$ before are of the form $(1,2p-2)$ with $p \notin A_0$ 
while $(q+2) \in A_0$. Therefore,  $(1,2q +2)$ is not yet included to $T$. Now we include all such  $(1,2q +2)$'s to $T$.

\noindent
\textbf{Step 3:} Now consider an element $q \in A_1 \cap A_3$. Then there exists a prime  $p$ such that $p+q = 2r - 2$ and by (\ref{eqn2}) we know that 
$(2,2p-4) \in S_2$. 
We will put all such $(2,2p-4)$'s in $T$ if they were not already in $T$.

\noindent
Let for some $q$, its corresponding $(2,2p-4)$ were already in $T$. That means that element was included to $T$ due to Step~1.
An argument similar to Step~2 will show that there is an edge between $(2q+2)$ and $2$. We will include all those $(1,2q+2)$'s to $T$ which were not 
included to $T$ before. 

\noindent
There may be some $(1,2q+2)$ which was included to $T$ before. Then that inclusion was due to Step~2. 
This implies $p$, $(p-2)$ and $(p-4)$ are all odd primes. The only such instance is when $p=7$. Thus, $2r -6 = (p-4) +q =3+q$. As $(p-4) \geq q$ we have $q =3$. Hence, $2r = 12$. It is easy to check that the theorem holds for $2r=12$. Therefore, we can ignore this case. 

\noindent
There are four more steps, namely, for $q \in A_1 \cap A_4$, $q \in A_2 \cap A_3$, $q \in A_2 \cap A_4$ and $q\in A_3\cap A_4$ in that order, that will conclude the proof. Those cases can be handled in a similar way like above.
\hfill  $\square $

\medskip

\noindent
Our interest in the degree of the bipartite graph $\mathcal{G}_\infty$ prompted us to study the complete bipartite subgraphs of $\mathcal{G}_\infty$ from number theoretic point of view. 
    
\begin{prop}
If the complete bipartite graph $K_{m,n}$ is a subgraph of $\mathcal{G}_\infty$, then there exists a set $\{p_1, p_2, \ldots, p_m\}$ of $m$ primes and a set $\{r_1, r_2, ... r_{n-1}\}$ of $(n-1)$ positive integers such that $p_i + r_j$ is a prime for all $(i,j) \in \{1,2,...,m\} \times \{1,2,...,n \}$. 
\end{prop}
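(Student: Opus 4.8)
The plan is to read the required primes and integers straight off the vertices of the given copy of $K_{m,n}$, using only the ``sum'' half of the adjacency condition in the Goldbach graph. Suppose $K_{m,n}$ sits inside $\mathcal{G}_\infty$ with parts $\set{u_1,\ldots,u_m}$ and $\set{w_1,\ldots,w_n}$, all even numbers. By the definition of $\mathcal{G}_\infty = \mathcal{G}_{\mathcal{E}}(\mathcal{P})$, for every $i$ and $j$ both $\frac{u_i+w_j}{2}$ and $\frac{\abs{u_i-w_j}}{2}$ lie in $\mathcal{P}$; in particular $\frac{u_i+w_j}{2}$ is an odd prime for every edge $u_iw_j$. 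The one genuinely important choice I would make first is to relabel the second part so that $w_1 = \min_j w_j$ is the smallest vertex on that side.

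Next I would define the candidates explicitly. Set $p_i := \frac{u_i+w_1}{2}$ for $1 \leq i \leq m$. Each $p_i$ is prime because $u_iw_1$ is an edge, and since the $u_i$ are distinct even numbers the values $p_i$ are distinct, so $\set{p_1,\ldots,p_m}$ is a set of $m$ primes. Then for $2 \leq j \leq n$ set $r_{j-1} := \frac{w_j - w_1}{2}$; because $w_1$ is the minimum of the (distinct) even numbers $w_j$, each $r_{j-1}$ is a distinct \emph{positive} integer, giving a set $\set{r_1,\ldots,r_{n-1}}$ of $n-1$ positive integers.

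The verification is then a one-line cancellation: for every pair $(i,j)$ with $2 \leq j \leq n$,
$$p_i + r_{j-1} \ =\ \frac{u_i+w_1}{2} + \frac{w_j-w_1}{2} \ =\ \frac{u_i+w_j}{2},$$
which is an odd prime since $u_iw_j$ is an edge of the complete bipartite subgraph. Hence $p_i + r_j$ is prime for all $(i,j) \in \set{1,\ldots,m}\times\set{1,\ldots,n-1}$, as required. The only point that needs care --- and the reason the proof is not completely automatic --- is guaranteeing that the $r_j$ come out \emph{positive} rather than merely nonzero; this is exactly what forces the choice of $w_1$ as the smallest vertex on its side. It is worth remarking that the difference condition $\frac{\abs{u_i-w_j}}{2} \in \mathcal{P}$ plays no role here, so the statement in fact uses only half of the adjacency definition of $\mathcal{G}_\infty$.
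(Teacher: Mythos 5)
Your proof is correct and is essentially the paper's own argument: both index the second part so that its smallest vertex plays the role of $y_1$ (your $w_1$), set $p_i = \frac{u_i+w_1}{2}$ and $r_{j-1} = \frac{w_j-w_1}{2}$, and verify primality by the telescoping identity $p_i + r_{j-1} = \frac{u_i+w_j}{2}$. Your explicit remarks on distinctness, positivity of the $r_j$, and the unused difference condition are minor refinements of the same approach.
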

 
\begin{proof}
Let $X$ and $Y$ be the two partite sets of $K_{m,n}$. Index the vertices of $X = \{x_1, x_2, ..., x_m\}$ and $Y = \{y_1, y_2, ..., y_n\}$ in increasing order. Let $p_i = \frac{x_i+y_1}{2}$ and $r_j = \frac{y_{j+1}-y_1}{2}$ for $(i,j) \in \{1,2,...,m\} \times \{1,2,...,n-1 \}$. Note that $p_i + r_j = \frac{x_i+y_{j+1}}{2}$ is a prime for each $(i,j) \in \{1,2,...,m\} \times \{1,2,...,n-1 \}$.
\end{proof}  

\noindent
Now we will prove some conditions for a complete bipartite subgraph of $ \mathcal{G}_\infty $ with the aid of the following two lemmas. Let us denote $\Nat\cup\set{0}$ by $\Nat_0$.

\begin{lem}\label{lem n6}
Let $a,b \notin 6\Nat_0$ and  $ab \in E(\mathcal{G}_\infty)$, then $ \mid a - b \mid = 6$.
\end{lem}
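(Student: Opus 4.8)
The plan is to translate the edge condition into a statement about two odd primes and then run a short case analysis modulo $3$. Since an edge joins distinct vertices (if $a=b$ then $\frac{\abs{a-b}}{2}=0$ is not an odd prime), I may assume $a>b$; both vertices lie in $\mathcal{E}\setminus 6\Nat_0$, so in particular $a,b\geq 2$. By the definition of $\mathcal{G}_\infty=\mathcal{G}_{\mathcal{E}}(\mathcal{P})$, the edge $ab$ means that $p:=\frac{a+b}{2}$ and $q:=\frac{a-b}{2}$ are both odd primes, and inverting these relations gives $a=p+q$ and $b=p-q$. Thus the whole statement reduces to proving $q=3$, equivalently $\abs{a-b}=2q=6$. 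I would first record the elementary observation that, for an even number, lying outside $6\Nat_0$ is the same as not being divisible by $3$ (evenness already supplies the factor $2$); so the hypotheses become $3\nmid p+q$ and $3\nmid p-q$.

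The core step is a case analysis of $p,q$ modulo $3$. Suppose neither $p$ nor $q$ equals $3$; being odd primes, both are then $\equiv\pm1\pmod 3$. If $p\equiv q\pmod 3$ then $b=p-q\equiv 0\pmod 3$, while if $p\equiv -q\pmod 3$ then $a=p+q\equiv 0\pmod 3$; either way one of $a,b$ is divisible by $3$, contradicting the hypotheses. Hence at least one of $p,q$ equals $3$. The possibility $p=3$ is then excluded because it would force $b=3-q$, and since $q$ is an odd prime $q\geq 3$, giving $b\leq 0$ and contradicting $b\geq 2$. Therefore $q=3$, and $\abs{a-b}=2q=6$.

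I expect no serious difficulty in this argument; the only point that needs care is the exclusion of the $p=3$ branch, which relies essentially on the vertices being non-negative so that $b>0$. The main thing to get right is making the modulo-$3$ dichotomy exhaustive and correctly pairing $p\equiv q\pmod 3$ with the divisibility of $b$ and $p\equiv -q\pmod 3$ with the divisibility of $a$.
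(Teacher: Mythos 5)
Your proof is correct and follows essentially the same route as the paper: write $a=p+q$, $b=p-q$ for odd primes $p,q$, rule out $p,q\neq 3$ by a residue argument (you work mod $3$, the paper mod $6$ --- equivalent given evenness), and then pin down $q=3$ using non-negativity of $b$. If anything, your write-up is slightly more careful than the paper's, which glosses over the ``either/or'' in the residue dichotomy and the exclusion of $p=3$.
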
    

\begin{proof}
Let $a > b$. Then $a = p + q$ and $b = p - q$ for some odd primes $p$ and $q$. If $p,q \neq 3$, then $p$ and $q$ are each of the form $6k+1$ or $6k -1$. Then either $6\mid(p+q)=a$ or $6\mid(p-q)=b$ contradicting the assumption of the lemma. Hence $q=3$ (as $p \geq q \geq 3)$ which implies $a - b = 2q = 6$. 
\end{proof}

\begin{lem}\label{lem d6}
Let $a,b \in 6\Nat_0$ and  $ab \in E(\mathcal{G}_\infty)$ with $a \geq b$, then $a=6$  and $b=0$. 
\end{lem}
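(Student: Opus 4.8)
The plan is to mirror the structure of the proof of Lemma~\ref{lem n6}: translate the edge condition into a statement about two odd primes and then exploit the divisibility hypothesis. Since $a \geq b$, the edge $ab \in E(\mathcal{G}_\infty)$ means precisely that
$$p := \frac{a+b}{2} \quad\text{and}\quad q := \frac{a-b}{2}$$
are both odd primes (note $\frac{|a-b|}{2} = \frac{a-b}{2}$ because $a \geq b$). Consequently $a = p+q$ and $b = p-q$, exactly as in the previous lemma.

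Next I would use the hypothesis $a,b \in 6\Nat_0$. Since $6 \mid a$ and $6 \mid b$, both sums $a+b = 2p$ and differences $a-b = 2q$ are divisible by $6$. In particular $3 \mid 2p$ and $3 \mid 2q$, and since $\gcd(3,2)=1$ this forces $3 \mid p$ and $3 \mid q$. But $p$ and $q$ are primes, so being divisible by $3$ leaves only the possibility $p = q = 3$.

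Substituting back then finishes the argument: $a = p+q = 6$ and $b = p-q = 0$, which is the desired conclusion. There is no real obstacle in this proof; it is a short divisibility computation. The only points deserving a moment's care are the use of $a \geq b$ to discard the absolute value (so that $q$ is genuinely nonnegative and equals $\frac{|a-b|}{2}$), and the fact that $q$ being an odd prime forbids $a = b$, so the edge is not a degenerate loop.
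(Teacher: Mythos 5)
Your proof is correct and follows essentially the same route as the paper: both arguments observe that $6 \mid a$ and $6 \mid b$ force $\frac{a+b}{2}$ and $\frac{a-b}{2}$ to be divisible by $3$, and since these are primes they must both equal $3$, giving $a=6$ and $b=0$. Your extra remarks about the absolute value and the non-degeneracy of the edge are harmless elaborations, not deviations.
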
   
    
\begin{proof}
As both  $a$ and $b$ are divisible by 6, both $\frac{a+b}{2}$ and $\frac{a-b}{2}$ are divisible by 3. Since they are primes, $\frac{a+b}{2} = 3 = \frac{a-b}{2}$. Therefore, $a=6, b=0$.
\end{proof} 

\begin{thm}\label{kmn}
Let $K_{m,n}$ be a subgraph of $\mathcal{G}_\infty $ with partite sets $X$ and $Y$  such that $m,n > 2$. Then either $X \subset 6 \Nat_0$ and $Y \cap 6 \Nat_0 = \emptyset$ or $Y \subset 6\Nat_0$ and $X \cap 6\Nat_0 = \emptyset$.
\end{thm}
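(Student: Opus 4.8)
The plan is to show that the two partite sets $X$ and $Y$ cannot both meet $6\Nat_0$ and cannot both avoid it, which leaves exactly the two symmetric alternatives in the statement. I would structure the argument around the two lemmas just established, using them to rule out the two ``mixed'' possibilities.

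First I would dispose of the case where both partite sets contain a vertex divisible by $6$. Suppose $a \in X \cap 6\Nat_0$ and $b \in Y \cap 6\Nat_0$. Since $K_{m,n}$ is a subgraph of $\mathcal{G}_\infty$, the vertices $a$ and $b$ are adjacent, so $ab \in E(\mathcal{G}_\infty)$; by Lemma~\ref{lem d6} the only such pair has $\{a,b\} = \{0,6\}$. But $m,n > 2$ means $X$ has at least three vertices and $Y$ has at least three vertices, and every vertex of $X$ is adjacent to every vertex of $Y$. So $a$ would have to be adjacent to three distinct vertices of $Y$, each forced by Lemma~\ref{lem d6} into the tiny set $\{0,6\}$ whenever it also lies in $6\Nat_0$ — I would use this counting pressure to derive a contradiction. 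Concretely, if $X$ contains a multiple of $6$, then each vertex of $Y$ that is also a multiple of $6$ must pair with it to give $\{0,6\}$, which cannot accommodate three distinct such vertices; this forces at most one multiple of $6$ on the relevant side and, combined with the symmetric bound, contradicts having multiples of $6$ on both sides.

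Next I would rule out the case where \emph{neither} partite set is contained in $6\Nat_0$, i.e.\ each of $X$ and $Y$ contains a vertex outside $6\Nat_0$. Pick $a \in X$ with $a \notin 6\Nat_0$ and $b \in Y$ with $b \notin 6\Nat_0$; these are adjacent, so by Lemma~\ref{lem n6} we get $|a - b| = 6$. The key leverage is again the completeness of $K_{m,n}$: since $n > 2$, the vertex $a \notin 6\Nat_0$ is adjacent to at least three vertices of $Y$. Any such neighbor that is itself outside $6\Nat_0$ must differ from $a$ by exactly $6$ (Lemma~\ref{lem n6}), and there are only two integers at distance $6$ from $a$, namely $a-6$ and $a+6$. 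Hence $a$ can have at most two neighbors outside $6\Nat_0$, so among its $\geq 3$ neighbors in $Y$ at least one lies in $6\Nat_0$; running the symmetric argument on a vertex of $Y$ outside $6\Nat_0$ forces multiples of $6$ to appear on the $X$ side too. I would then show this drives us back into the already-excluded configuration (multiples of $6$ on both sides) or into a direct numerical contradiction.

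The main obstacle will be the bookkeeping in the second case: Lemma~\ref{lem n6} only constrains adjacencies \emph{between} two non-multiples of $6$, so I must carefully track which neighbors are forced into $6\Nat_0$ and confirm that the resulting multiples of $6$ on both sides reproduce the forbidden situation of the first case (thereby closing the loop) rather than escaping it. I expect the cleanest route is to argue that ``not contained in $6\Nat_0$'' on both sides, together with $m,n>2$, forces at least one multiple of $6$ into each partite set via the distance-$6$ counting bound, at which point the first case applies and yields the contradiction. Once both mixed cases are eliminated, exactly one of $X$ or $Y$ is entirely inside $6\Nat_0$ and the other entirely outside it, which is precisely the stated dichotomy.
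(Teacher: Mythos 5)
Your overall architecture is legitimate and differs from the paper's: you rule out the two mixed situations (both partite sets meet $6\Nat_0$; neither partite set is contained in $6\Nat_0$) and then combine, whereas the paper assumes $X$ itself is mixed and shows directly that $|Y|\le 2$. The genuine gap is in your first case. From Lemma~\ref{lem d6} all you can conclude is $|X\cap 6\Nat_0|\le 1$ and $|Y\cap 6\Nat_0|\le 1$, and this does \emph{not} contradict ``multiples of $6$ on both sides'': the configuration $X\cap 6\Nat_0=\set{0}$, $Y\cap 6\Nat_0=\set{6}$ satisfies every constraint that Lemma~\ref{lem d6} can impose, since the unique multiple--multiple pair is $\set{0,6}$, which is exactly the pair the lemma permits (indeed $0$ and $6$ are genuinely adjacent in $\mathcal{G}_\infty$, as $\frac{0+6}{2}=\frac{6-0}{2}=3$ is an odd prime). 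So the sentence ``combined with the symmetric bound, contradicts having multiples of $6$ on both sides'' is a non sequitur. This matters structurally, because your second case is resolved only by reduction to the first: as written, nothing in the proposal actually produces a contradiction.

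The repair is to bring Lemma~\ref{lem n6} into the first case; the contradiction cannot come from the multiples of $6$ at all, it must come from the non-multiples. Your counting bounds give $|X\setminus 6\Nat_0|\ge m-1\ge 2$ and $|Y\setminus 6\Nat_0|\ge n-1\ge 2$, so pick distinct $b,c\in X\setminus 6\Nat_0$ and distinct $d,e\in Y\setminus 6\Nat_0$. Completeness of $K_{m,n}$ and Lemma~\ref{lem n6} force $\set{d,e}\subseteq\set{b-6,b+6}$ and $\set{d,e}\subseteq\set{c-6,c+6}$; since all of these are two-element sets, $\set{b-6,b+6}=\set{c-6,c+6}$, which forces $b=c$, a contradiction. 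With this supplement your first case closes, and then your second case (which, contrary to the worry in your last paragraph, is the unproblematic half: a non-multiple of $6$ has at most two non-multiple neighbours, so $m,n\ge 3$ plants a multiple of $6$ on each side) reduces to it correctly, and the dichotomy follows as you describe. Note that the paper's proof also needs both lemmas working in tandem within each subcase (Lemma~\ref{lem d6} bounding $|Y\cap 6\Nat_0|$ and Lemma~\ref{lem n6} bounding $|Y\setminus 6\Nat_0|$); that cooperation is essentially unavoidable, and it is precisely what your first case omits.
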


\begin{proof}
If neither $X \not\subset 6 \Nat_0$ nor $X \cap 6\Nat_0 \ne \emptyset$, then  either there exist $a \in X \cap 6\Nat_0$ and $\left\{ b,c \right\} \subset X \cap (\Nat_0\setminus 6\Nat_0)$ or there exist $\left\{ b,c \right\} \subset X \cap 6\Nat_0$ and $a \in X \cap (\Nat_0\setminus 6\Nat_0)$. 

\vspace{1em}
\noindent
If $a \in X \cap 6\Nat_0$ and $\{ b,c \} \subset X \cap (\Nat_0\setminus 6\Nat_0)$, then $\mid Y \cap 6 \Nat_0 \mid \leq 1$ by Lemma~\ref{lem d6}. Again for any $d \in Y \cap (\Nat_0\setminus 6\Nat_0)$, Lemma~\ref{lem n6} forces  $b-6=d=c+6$ assuming $b > c$, without loss of generality. Hence $ \mid Y \cap (\Nat_0\setminus 6\Nat_0) \mid \leq 1$. Therefore,  $ \mid Y \mid = \mid Y \cap 6 \Nat_0 \mid + \mid Y \cap (\Nat_0\setminus 6\Nat_0) \mid \leq 2$, a contradiction.

\vspace{1em}
\noindent
If $\{ b,c \} \subset X \cap 6\Nat_0$ and $a \in X \cap (\Nat_0\setminus 6\Nat_0)$, then $Y \cap 6 \Nat_0 = \emptyset$ as otherwise each vertex of $Y \cap 6 \Nat_0$ must be adjacent to both $b$ and $c$ forcing them to be the same vertex by  Lemma~\ref{lem d6}. On the other hand,  if $d \in Y \cap (\Nat_0\setminus 6\Nat_0)$, then $d = a-6$ or $d = a+6$ by  Lemma~\ref{lem n6}. This implies $ \mid Y \mid = \mid Y \cap 6 \Nat_0 \mid + \mid Y \cap (\Nat_0\setminus 6\Nat_0) \mid \leq 2$, a contradiction. So either $X \subset 6\Nat_0$ or $X \cap 6\Nat_0 = \emptyset$. If $X \subset 6\Nat_0$, then $Y \cap 6\Nat_0 = \emptyset$ by Lemma~\ref{lem d6}. If $X \cap 6\Nat_0 = \emptyset$, then $Y \cap (\Nat_0\setminus 6\Nat_0) = \emptyset$ by Lemma~\ref{lem n6}. For if $x\in X$ is adjacent to $x-6,x+6\in Y$, then $x+6$ can be adjacent to only $x$ and $x+12$. But $x+12$ is not adjacent to $x-6$.
\end{proof}
    
\noindent
In the next result we will also capture the case where at least one of the partite sets have exactly two vertices while the other one has at least four of them.     

\begin{thm}
Let $K_{2,n}$ be a subgraph of $\mathcal{G}_\infty $ with partite sets $X$ and $Y$  such that $|X| = 2$ and $|Y| = n > 3$. Then either $X \subset 6 \Nat_0$ and $Y \cap 6 \Nat_0 = \emptyset$, or $X \cap 6\Nat_0 = \emptyset$ and $\mid Y \cap (\Nat_0\setminus 6\Nat_0) \mid \leq 1$.
\end{thm}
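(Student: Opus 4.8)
The plan is to run a case analysis on how many of the two vertices of $X = \{x_1, x_2\}$ lie in $6\Nat_0$. Since $|X| = 2$, there are exactly three possibilities: $X \subseteq 6\Nat_0$, $X \cap 6\Nat_0 = \emptyset$, or exactly one of $x_1, x_2$ belongs to $6\Nat_0$. The first two cases should deliver the two stated alternatives almost immediately from Lemmas~\ref{lem n6} and~\ref{lem d6}, whereas the third (``mixed'') case is the one I expect to require the real work: I will argue that it cannot occur at all once $n > 3$.

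For the case $X \subseteq 6\Nat_0$: every $y \in Y$ is adjacent to both $x_1, x_2 \in 6\Nat_0$, so if some $y \in Y \cap 6\Nat_0$ existed then Lemma~\ref{lem d6} would force $\{x_1, y\} = \{0,6\} = \{x_2, y\}$, whence $x_1 = x_2$, contradicting $|X| = 2$. Hence $Y \cap 6\Nat_0 = \emptyset$, which is the first alternative. For the case $X \cap 6\Nat_0 = \emptyset$: any $y \in Y \cap (\Nat_0 \setminus 6\Nat_0)$ is adjacent to both $x_1, x_2 \notin 6\Nat_0$, so Lemma~\ref{lem n6} gives $|x_1 - y| = 6 = |x_2 - y|$, i.e. $y \in \{x_1 - 6, x_1 + 6\} \cap \{x_2 - 6, x_2 + 6\}$. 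Since $x_1 \neq x_2$ these two distinct two-element sets cannot coincide, so their intersection has at most one element; this yields $|Y \cap (\Nat_0 \setminus 6\Nat_0)| \leq 1$, the second alternative.

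The main obstacle is to rule out the mixed case, say $x_1 \in 6\Nat_0$ and $x_2 \notin 6\Nat_0$. Here I would split $Y$ into $Y \cap 6\Nat_0$ and $Y \cap (\Nat_0 \setminus 6\Nat_0)$ and bound each separately. Every $y \in Y \cap 6\Nat_0$ is adjacent to $x_1 \in 6\Nat_0$, so Lemma~\ref{lem d6} pins $y$ down to the unique partner of $x_1$ in $\{0,6\}$, giving $|Y \cap 6\Nat_0| \leq 1$; every $y \in Y \cap (\Nat_0 \setminus 6\Nat_0)$ is adjacent to $x_2 \notin 6\Nat_0$, so Lemma~\ref{lem n6} forces $y \in \{x_2 - 6, x_2 + 6\}$, giving $|Y \cap (\Nat_0 \setminus 6\Nat_0)| \leq 2$. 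Adding these yields $|Y| \leq 3$, contradicting $n > 3$, so the mixed case is impossible and the theorem follows. The delicate point throughout is that $|X| = 2$ provides strictly less forcing than the $m, n > 2$ situation of Theorem~\ref{kmn}, which is exactly why the second alternative here is only $|Y \cap (\Nat_0 \setminus 6\Nat_0)| \leq 1$ rather than $Y \subseteq 6\Nat_0$; and it is precisely the hypothesis $n > 3$ that makes the count in the mixed case strict.
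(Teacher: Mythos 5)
Your proposal is correct and follows essentially the same route as the paper's proof: the same three-way case split on $X \cap 6\Nat_0$, with the mixed case eliminated by exactly the paper's count ($\lvert Y \cap 6\Nat_0\rvert \leq 1$ via Lemma~\ref{lem d6}, $\lvert Y \cap (\Nat_0\setminus 6\Nat_0)\rvert \leq 2$ via Lemma~\ref{lem n6}, hence $\lvert Y\rvert \leq 3$, contradicting $n > 3$), and the two remaining cases yielding the two stated alternatives from the same lemmas. Your write-up merely spells out details (e.g., why distinct midpoints force the two sets $\{x_i - 6, x_i + 6\}$ to meet in at most one point) that the paper leaves implicit.
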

    
\begin{proof} Suppose $X = \{ a,b \}$. Without loss of generality, let $a \in 6 \Nat_0$ and $b \in (\Nat_0\setminus 6\Nat_0)$. 
Then $\mid Y \cap 6 \Nat_0 \mid \leq 1$ and  $\mid Y \cap (\Nat_0\setminus 6\Nat_0) \mid \leq 2$ by Lemma~\ref{lem d6} and \ref{lem n6}. Therefore, $\mid Y \mid \leq 3$, a contradiction. Hence either $X \subset 6\Nat_0$ or $X \cap 6\Nat_0 = \emptyset$. If $X \subset 6\Nat_0$, then $Y \cap 6\Nat_0 = \emptyset$ by Lemma~\ref{lem d6}. If $X \cap 6\Nat_0 = \emptyset$, then $a$ and $b$ can have at most one common neighbour $c$ such that $c \in (\Nat_0\setminus 6\Nat_0)$ by  Lemma~\ref{lem n6}. 
\end{proof}

\noindent
Now let us try to understand the structure of independent sets in $\mathcal{G}_\infty$. Of course, as $\mathcal{G}_\infty$
is a bipartite graph, there are at least two distinct (and disjoint) independent sets in the form of the two partite sets. 
But how big can an independent set consisting of only consecutive (non-negative) even numbers be? We answer this question in the following result.

\begin{thm}
There exist arbitrarily large independent sets containing consecutive (non-negative) even numbers in $ \mathcal{G}_\infty$. 
\end{thm}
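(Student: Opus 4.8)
The plan is to translate adjacency in $\mathcal{G}_\infty$ into an arithmetic condition on the half-sum and half-difference, and then to suppress all relevant half-sums by placing a block of consecutive even numbers inside a long prime-free interval. Recall that, with odd set $\mathcal{P}$ and even set $\mathcal{E}$, two even numbers $2a$ and $2b$ with $a<b$ are adjacent in $\mathcal{G}_\infty$ precisely when both $\frac{2a+2b}{2}=a+b$ and $\frac{2b-2a}{2}=b-a$ are odd primes. In particular, if $a+b$ fails to be prime, then $2a$ and $2b$ cannot be adjacent, regardless of the difference. So to build an independent set of consecutive even numbers it suffices to guarantee that the half-sum $a+b$ is composite for every pair drawn from the block.

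First I would fix a target size $L$ and invoke the standard fact that arbitrarily long runs of consecutive composite integers exist: for instance, each of the $2L+1$ integers $(2L+2)!+2,\ (2L+2)!+3,\ \ldots,\ (2L+2)!+(2L+2)$ is composite, since $(2L+2)!+k$ is divisible by $k$ and exceeds it for $2\le k\le 2L+2$. Write $M=(2L+2)!+2$; then $M$ is even and every integer in the interval $[M,M+2L]$ is composite.

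Next I would consider the $L+1$ consecutive even numbers $M,M+2,\ldots,M+2L$, that is, the numbers $2a$ with $a$ ranging over the $L+1$ consecutive integers $\tfrac{M}{2},\tfrac{M}{2}+1,\ldots,\tfrac{M}{2}+L$. For any two distinct such indices $a\neq b$, the half-sum $a+b$ is an integer lying strictly between $M$ and $M+2L$, hence it falls inside the prime-free window and is composite. By the adjacency criterion above, no two of these even numbers are adjacent, so they form an independent set of $L+1$ consecutive even numbers. Since $L$ is arbitrary, this produces independent sets of consecutive even numbers of unbounded size.

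The argument has no genuinely hard step; the only point requiring care is the range bookkeeping, namely verifying that the half-sums $a+b$ of indices drawn from $[\tfrac{M}{2},\tfrac{M}{2}+L]$ all land inside the composite interval $[M,M+2L]$, which is what forces the alignment of the block with the prime gap. One should also note in passing that the half-differences play no role here: it is cheaper to kill every half-sum uniformly than to control the differences, which range over $1,\ldots,L$ and necessarily include small primes.
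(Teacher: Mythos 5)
Your proof is correct and takes essentially the same approach as the paper: both place a block of $L+1$ consecutive even numbers inside the prime-free window $[(2L+2)!+2,\,(2L+2)!+(2L+2)]$ and observe that every pairwise half-sum lands in that window, hence is composite, so no two of the numbers are adjacent. Your explicit range bookkeeping and the remark that the half-differences are irrelevant simply spell out what the paper's proof leaves implicit.
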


\begin{proof} 
Given any $n\in\Nat$, the  set $R= \{(2n)! + 2, (2n)! + 3, ... ,(2n)! + (2n) \}$ is a set of  consecutive composite numbers. 
Hence no two vertices in the set
$$S = \{ (2n)! + 2, (2n)! + 4, ..., (2n)! + 2n \}$$ 
are adjacent to each other as  $\frac{a+b}{2} \in R$ for all $a,b \in S$. Hence $S$ is an independent set containing $n$ consecutive (non-negative) even numbers in $ \mathcal{G}_\infty$. 
\end{proof} 

\section{Conclusions}

We conclude the paper with an interesting observation that the graphs $\mathcal{G}_{\mathcal{E}_n^*}(\mathcal{P}_1)$ are Hamiltonian for all even $n$ with $4\leqslant n\leqslant 58$, where $\mathcal{E}_n^*=\mathcal{E}_n \smallsetminus \{ 0\}$ and $\mathcal{P}_1=\mathcal{P}\cup \{ 1\}$ (cf. Appendix A). Since the graph $\mathcal{G}_{\mathcal{E}_n^*}(\mathcal{P}_1)$ is bipartite, there cannot be any odd cycle in the graph. But it follows from the above observations that $\mathcal{G}_{\mathcal{E}_n^*}(\mathcal{P}_1)$ has a Hamiltonian path (i.e., a spanning path) for all odd $n$ with $5\leqslant n\leqslant 57$ for if $\mathcal{G}_{\mathcal{E}_{2m}^*}(\mathcal{P}_1)$ is Hamiltonian, then deleting the vertex corresponding to $2m$ from any of its Hamiltonian cycle, we get a Hamiltonian path of $\mathcal{G}_{\mathcal{E}_{2m-1}^*}(\mathcal{P}_1)$. Thus $\mathcal{G}_{\mathcal{E}_n^*}(\mathcal{P}_1)$ has a Hamiltonian path for all $n$ with $4\leqslant n\leqslant 58$. The following is an interesting Hamiltonian path of $\mathcal{G}_{\mathcal{E}_{58}^*}(\mathcal{P}_1)$ that starts with $2$, ends at $116$ and covers all even integers in between them:

\medskip

\noindent $2, 4, 6, 8, 14, 12, 10, 16, 18, 20, 26, 32, 30, 28, 34, 24, 22, 36, 38, 44, 42, 40, 46, 48, 58, 60, 62, 56, 50, 72, 70,$

\noindent $64, 54, 52, 66, 68, 74, 84, 82, 76, 90, 88, 78, 80, 86, 92, 102, 100, 94, 108, 98, 96, 106, 112, 114, 104, 110, 116.$

\vspace{1em}
\noindent
Let us call two even natural numbers {\em conjugate} to each other if $\set{a,b}=\set{p-q,p+q}$ for some $p,q$ each of which is either an odd prime or $1$. We see that there is a sequence of even natural numbers up to $1000$ such that any two consecutive numbers in this sequence are conjugate to each other (cf. Appendix B). Now these observations lead to the following questions:

\begin{enumerate}
\item Does there exist a sequence of all even natural numbers such that any two consecutive numbers in this sequence are conjugate to each other?
\item If the answer to the above question is negative, then what is the least value of $m$ such that $\mathcal{G}_{\mathcal{E}_{2m}^*}(\mathcal{P}_1)$ is not Hamiltonian?
\end{enumerate}

\vspace{1em}\noindent
\textbf{Acknowledgements:}

\vspace{.2cm}\noindent 
{\small We are grateful to the learned referees for their valuable suggestions and corrections which definitely improved and enhanced the paper. This research is partially supported by Jadavpur University under RUSA 2.0 Scheme (R-11/742/19 dated 27.06.2019) to the third author and partially by NSF under award CCF-1907738, which supported the second author in part.}

\bibliographystyle{amsplain}

\begin{thebibliography}{1}

\renewcommand{\baselinestretch}{1}

{\footnotesize 

\bibitem{BG}
J.~Bang-Jensen and G.~Gutin, Classes of Directed Graphs. Springer Monographs in Mathematics. Springer, Cham. 2018.

\bibitem{BFH}
J.~Bang-Jensen and F.~Havet, \emph{Tournaments and Semicomplete Digraphs}, In: Bang-Jensen J., Gutin G. (eds) Classes of Directed Graphs. Springer Monographs in Mathematics. Springer, Cham. 2018.

\bibitem{bas}
A.~Basu, S.~Das, S.~Ghosh and M.~Sen, \emph{Circular-arc bigraphs and its subclasses}, J. Graph Theory \textbf{73} (2013), 361--376.

\bibitem{berr}
L.~A.~Berry, F.~Havet, C.~L.~Sales, B.~Reed, S.~Thomasse, \emph{Oriented trees in digraphs}, Discrete Math., \textbf{313} (2013), 967--974.

\bibitem{chen}
J.~R Chen, \emph{On the representation of a large even integer as the sum of a prime and the product of at most two primes}, Kexue Tongbao, \textbf{11} (1966), 385--386.

\bibitem{JEN}
J.~B.~Jensen and G.~Z.~Gutin, Digraphs Theory, Algorithm and Applications, Springer, 2009.

\bibitem{KR}
L.~Kronecker, \emph{Vorlesungen $\ddot{u}$ber Zahlentheorie}, I. p. 68, Teubner, Leipzig, 1901.

\bibitem{mal}
E.~Maillet, L'interm$\acute{\text{e}}$diaire des math., \textbf{12} (1905), 108.

\bibitem{min}
Z.~K.~Min, \emph{Longest paths and cycles in bipartite oriented graphs}, J. Graph Theory, \textbf{11} (1987), 339--348.

\bibitem{mon}
J.~Monsalve and J.~Rada, \emph{Oriented bipartite graphs with minimal trace norm}, Linear and Multilinear Algebra, \textbf{67} (2019), 1121--1131.

\bibitem{rich}
J.~Richstein, \emph{Verifying the goldbach conjecture up to $4 \times 10^{14}$}, Mathematics of Computation, \textbf{70} (2000), 1745--1749.

\bibitem{JR}
J.~Riguet, \emph{Les relations des Ferrers}, C.~R.~Acad. Sci. Paris, 232 (1951), 1729.

\bibitem{shen}
J.~Shen, \emph{On the girth of digraphs}, Discrete Math., \textbf{211} (2000), 167--181.

\bibitem{tao}
Terence Tao, \emph{Every odd number greater than 1 is the sum of at most five primes}, Math. Comput., \textbf{83} (2014), 997--1038.

\bibitem{vin}
I.~M.~Vinogradov, \emph{Representation of an odd number as a sum of three primes}, Comptes Rendus (Doklady) de l'Academy des Sciences de l'USSR, \textbf{15} (1937), 191--294.

\bibitem{wang}
H.~Wang, \emph{Partition of a directed bipartite graph into two directed cycles}, Discrete Math., \textbf{160} (1996), 283--289.

\bibitem{DBW}
D.~B.~West, Introduction to Graph Theory, Prentice-Hall of India Pvt.Ltd. 2003.

\bibitem{YEO}
A.~Yeo, \emph{Semicomplete Multipartite Digraphs}, In: Bang-Jensen J., Gutin G. (eds) Classes of Directed Graphs. Springer Monographs in Mathematics. Springer, Cham. 2018.

\bibitem{zhang}
Yitang Zhang, \emph{Bounded gaps between primes}, Annals of Mathematics, \textbf{179} (2014), 1121--1174.
}
\end{thebibliography}

\newpage
\begin{center}
{\large\bf Appendix A}
\end{center}

{\footnotesize
$$\begin{array}{|c|l|}
\hline
\text{Number of} & \text{Hamiltonian Cycle} \\
\text{Vertices} & \\
\hline
4 & (4,2,8,6,4) \\
6 & (4,6,8,2,12,10,4) \\
8 & (4,2,8,14,12,10,16,6,4) \\
10 & (4,2,8,6,16,10,12,14,20,18,4) \\
12 & (4,2,8,6,16,10,12,22,24,14,20,18,4) \\
14 & (4,2,8,6,28,18,16,22,12,26,20,14,24,10,4) \\
16 & (4,2,8,6,16,10,12,22,24,14,20,26,32,30,28,18,4) \\
18 & (4,2,8,6,16,10,12,14,20,18,28,34,24,22,36,26,32,30,4) \\
20 & (4,2,8,6,16,10,12,22,36,38,24,14,20,26,32,30,28,34,40,18,4) \\
22 & (4,2,8,6,16,10,12,14,20,18,28,34,40,42,32,26,36,22,24,38,44,30,4) \\
24 & (4,2,8,6,16,10,12,14,20,18,28,30,32,26,36,22,24,34,40,46,48,38,44,42,4)\\
26 & (4,2,8,6,16,10,12,14,20,18,28,34,40,46,36,22,24,50,44,38,48,26,32,30,52,42,4) \\
28 & (4,2,8,6,16,10,12,14,20,18,28,30,56,50,44,38,24,22,36,46,40,34,48,26,32,54,52,42,4) \\
30 & (4,2,8,6,16,10,12,14,20,18,28,30,56,50,44,38,24,22,36,46,40,34,48,58,60,26,32,54,52,42,4) \\
32 & (4,2,8,6,16,10,12,14,20,18,28,30,32,26,36,22,24,34,40,46,48,38,44,50,56,62,60,58,64,54,52, \\
 & \null\hfill 42,4) \\
34 & (4,2,8,6,16,10,12,14,20,18,28,30,32,26,36,22,24,34,40,46,48,38,44,50,56,62,60,58,64,54,68, \\
 & \null\hfill 66,52,42,4) \\
36 & (4,2,8,6,16,10,12,14,20,18,28,30,32,26,36,22,24,34,40,46,48,38,44,50,72,70,64,58,60,62,56, \\
 & \null\hfill 66,68,54,52,42,4) \\
38 & (4,2,8,6,16,10,12,14,20,18,28,30,32,26,36,22,24,34,40,46,48,38,44,50,56,62,60,58,64,54,52, \\
 & \null\hfill 66,68,74,72,70,76,42,4) \\
40 & (4,2,8,6,16,10,12,14,20,18,28,30,32,26,36,22,24,34,40,42,52,54,68,74,48,38,44,50,56,62,72, \\
 & \null\hfill 46,60,58,64,70,76,66,80,78,4) \\
42 & (4,2,8,6,16,10,12,14,20,18,28,30,32,26,36,22,24,34,40,42,44,38,48,46,60,58,64,70,76,82,84, \\
 & \null\hfill 50,56,62,72,74,68,54,52,66,80,78,4) \\
44 & (4,2,8,6,16,10,12,14,20,18,28,30,32,26,36,22,24,34,40,42,52,66,68,74,48,38,44,50,56,62,84, \\
 & \null\hfill 58,64,70,76,82,60,46,72,86,80,54,88,78,4) \\
46 & (4,2,8,6,16,10,12,14,20,18,28,30,32,26,36,22,24,34,40,42,44,38,48,46,60,58,64,70,76,82,84, \\
 & \null\hfill 50,56,62,72,74,68,54,52,66,92,86,80,78,88,90,4) \\
48 & (4,2,8,6,16,10,12,14,20,18,28,30,32,26,36,22,24,34,40,42,44,38,48,46,60,58,64,70,76,82,96, \\
 & \null\hfill 50,56,62,72,94,84,74,68,54,52,66,92,86,80,78,88,90,4) \\
50 & (4,2,8,6,16,10,12,14,20,18,28,30,32,26,36,22,24,34,40,46,48,38,44,42,100,94,72,62,56,50,96, \\
 & \null\hfill 98,60,58,64,70,76,82,84,74,68,54,52,66,92,86,80,78,88,90,4) \\
52 & (4,2,8,6,16,10,12,14,20,18,28,30,32,26,36,22,24,34,40,46,48,38,44,42,104,102,100,94,72,62,56, \\
 & \null\hfill 50,96,98,60,58,64,70,76,82,84,74,68,54,52,66,92,86,80,78,88,90,4) \\
54 & (4,2,8,6,16,10,12,14,20,18,28,30,32,26,36,22,24,34,40,46,48,38,44,42,104,102,100,106,108,94,72, \\
 & \null\hfill 62,56,50,96,98,60,58,64,70,76,82,84,74,68,54,52,66,92,86,80,78,88,90,4) \\
56 & (4,10,16,6,8,14,14,20,18,28,30,32,26,36,22,24,34,40,46,48,38,44,42,100,102,104,110,108,106,112, \\
 & \null\hfill 90,88,78,80,86,92,66,52,54,68,74,84,82,76,70,64,58,60,98,96,50,56,62,72,94,12,2,4) \\
58 & (6,4,2,8,14,12,10,16,18,20,26,32,30,28,34,24,22,36,38,44,42,40,46,48,58,60,62,56,50,72,70,64,54, \\
 &  \null\hfill 52,66,68,74,84,82,76,90,88,78,80,86,92,102,116,110,104,114,112,106,96,98,108,94,100,6) \\

\hline
\end{array}$$
}

\newpage
\begin{center}
{\large\bf Appendix B}

\vspace{2em}
A Hamiltonian path in the graph $\mathcal{G}_{\mathcal{E}_{500}^*}(\mathcal{P}_1)$,\\ 
(A sequence of even natural numbers up to $1000$ where \\
any pair of consecutive numbers are conjugate to each other).
\end{center}

\vspace{2em}

$\{22, 16, 10, 4, 2, 8, 14, 20, 26, 32, 54, 52, 42, 80, 86, 92, 102, 112, 106, 100, 94, 12, 514, 408,\\
 394, 400, 138, 284, 278, 288, 250, 204, 658, 664, 30, 136, 142, 120, 74, 68, 6, 340, 334, 328, 294,\\
 152, 146, 528, 586, 372, 362, 356, 350, 344, 18, 28, 34, 40, 46, 36, 430, 436, 442, 576, 662, 656,\\
 650, 644, 638, 24, 338, 636, 710, 672, 682, 516, 502, 496, 798, 844, 642, 776, 770, 324, 218, 228,\\
 926, 932, 390, 508, 330, 292, 210, 304, 310, 316, 222, 244, 238, 264, 482, 476, 282, 416, 422, 720,\\
 598, 604, 610, 616, 622, 504, 82, 76, 70, 64, 58, 420, 578, 300, 322, 996, 998, 984, 982, 900, 914,\\
 908, 906, 800, 794, 180, 122, 84, 842, 836, 78, 88, 114, 412, 406, 108, 166, 160, 154, 148, 366, 592,\\
 450, 764, 758, 60, 778, 768, 634, 628, 66, 812, 806, 72, 626, 468, 206, 540, 698, 704, 582, 940, 934,\\
 588, 674, 680, 686, 552, 646, 640, 762, 752, 630, 808, 814, 708, 746, 740, 738, 700, 594, 872, 882,\\
 892, 870, 788, 786, 172, 834, 688, 694, 948, 986, 980, 974, 968, 750, 268, 274, 280, 286, 252, 374,\\
 492, 466, 460, 454, 312, 614, 620, 618, 676, 670, 972, 970, 964, 942, 716, 722, 696, 818, 824, 830,\\
 828, 946, 960, 242, 236, 230, 224, 702, 920, 954, 248, 254, 260, 266, 272, 606, 308, 314, 320, 306,\\
 388, 414, 332, 666, 472, 726, 728, 654, 632, 246, 712, 706, 732, 554, 560, 566, 572, 966, 928, 714,\\
 724, 730, 736, 742, 660, 262, 924, 898, 816, 958, 936, 902, 864, 118, 780, 862, 856, 850, 804, 962,\\
 876, 718, 684, 938, 944, 950, 956, 498, 220, 226, 232, 126, 352, 346, 48, 38, 44, 50, 56, 62, 96, 98,\\
 104, 110, 116, 198, 784, 790, 648, 734, 612, 190, 196, 202, 192, 890, 884, 878, 840, 886, 880, 874,\\
 852, 866, 860, 854, 912, 326, 888, 434, 428, 894, 832, 826, 820, 774, 548, 930, 212, 150, 296, 290,\\
 132, 410, 456, 562, 480, 494, 488, 510, 532, 426, 452, 446, 440, 318, 524, 518, 360, 478, 216, 418,\\
 424, 342, 404, 398, 276, 470, 464, 458, 624, 298, 600, 766, 772, 270, 448, 186, 952, 486, 652, 474,\\
 608, 534, 692, 546, 580, 574, 568, 354, 512, 506, 500, 378, 368, 174, 208, 214, 168, 158, 156, 602,\\
 596, 590, 584, 258, 200, 194, 188, 234, 392, 386, 380, 162, 164, 170, 176, 182, 144, 490, 484, 522,\\
 140, 134, 128, 90, 256, 462, 556, 558, 536, 918, 520, 526, 432, 550, 544, 538, 396, 922, 916, 438,\\
 364, 370, 376, 382, 240, 302, 444, 530, 384, 542, 336, 178, 184, 402, 124, 130, 348, 358, 756, 838,\\
 564, 782, 792, 802, 744, 910, 904, 822, 896, 810, 848, 858, 868, 570, 796, 690, 748, 754, 760, 678,\\
 668, 846, 992, 990, 976, 978, 988, 994, 1000\}$

\end{document}